\newif\ifcdc   \cdcfalse
\newif\ifarxiv \arxivfalse
\def\version#1{\csname #1true\endcsname}

\version{arxiv}

\ifcdc\ifarxiv
  \errmessage{Both versions selected: comment out one of \string\version{cdc} /
              \string\version{arxiv}}%
\fi\fi
\ifcdc\else\ifarxiv\else
  \errmessage{No version selected: uncomment one of \string\version{cdc} /
              \string\version{arxiv}}%
\fi\fi

\ifcdc   \input{preamble_cdc.tex}   \fi
\ifarxiv 

\documentclass[12pt]{article}

\usepackage[margin=1in]{geometry}

\usepackage{bm}

\usepackage{amsmath}   
\usepackage{amsfonts}  
\usepackage{amssymb}   
\usepackage{amsthm}    

\usepackage{graphicx}

\newtheorem{theorem}{Theorem}

\newtheorem{lemma}{Lemma}
\newtheorem{corollary}{Corollary}

\usepackage{enumitem}

\newtheorem{definition}{Definition}

\theoremstyle{remark}
\newtheorem{remark}{Remark}

\usepackage{makecell}

\input{mystyle.sty}

\newcommand{\VEC}{\text{Vec}}

\usepackage[colorlinks=true,
            linkcolor=blue,
            urlcolor=blue,
            citecolor=blue,
            anchorcolor=blue]{hyperref}

\title{Inverse Problems for Costs and Controls in LQG MFGs via Mean Field Trajectories}

\author{
  Gr\'egoire Lambrecht\thanks{Center for Data Science, New York University and NYU Shanghai. Partially supported by NSF Award 1922658. \texttt{gl3048@nyu.edu}}
  \and
  Mathieu Lauri\`ere\thanks{Shanghai Center for Data Science; NYU-ECNU Institute of Mathematical Sciences at NYU Shanghai; NYU Shanghai, Shanghai, People's Republic of China. \texttt{mathieu.lauriere@nyu.edu}}
}

\date{}
 \fi

\usepackage{comment}
\ifarxiv
  \includecomment{arxivonly}\excludecomment{cdconly}
  \newcommand{\arxiv}[1]{#1}\newcommand{\cdc}[1]{}
\else
  \excludecomment{arxivonly}\includecomment{cdconly}
  \newcommand{\arxiv}[1]{}\newcommand{\cdc}[1]{#1}
\fi

\ifarxiv
  \newcommand{\ricattiA}{Theorem~\ref{theorem:riccati_1}}
  \newcommand{\ricattiB}{Theorem~\ref{theorem:riccati_2}}
\else
  \newcommand{\ricattiA}{\cite[Thm.~4.1.6, p.~186]{AbouKandilFreilingIonescuJank2003}}
  \newcommand{\ricattiB}{\cite[Thm.~3.1.3, p.~96]{AbouKandilFreilingIonescuJank2003}}
\fi

\begin{document}

\maketitle
\ifcdc
  \thispagestyle{empty}
  \pagestyle{plain}
\fi


\begin{abstract}
	This paper investigates inverse problems for Linear-Quadratic-Gaussian (LQG) Mean Field Games (MFGs) based entirely on the observation of mean-covariance trajectories. We address three sequential challenges: identifying the optimal control for observed initializations, determining the control for arbitrary initializations, and recovering consistent cost parameters. After establishing the existence and uniqueness of the forward Nash equilibrium under mild hypotheses, we analyze the injectivity of the parameter-to-trajectory mapping, demonstrating that it is inherently non-injective and providing sufficient conditions for parameter equivalence. We prove that while the optimal control is locally identifiable for observed initializations under a rank condition, global identifiability requires a deeper structural recovery of the game's costs. To bridge this gap, we propose a constructive semidefinite programming method to infer cost parameters that are 
    consistent with the observed population dynamics. Numerical experiments illustrate this method.
\end{abstract}

\section{Introduction}

Mean Field Games (MFGs) provide a mathematical framework for analyzing strategic interactions in large populations of agents. Since their introduction by Lasry and Lions \cite{LasryLions2007} and Huang, Malhamé, and Caines \cite{HuangMalhameCaines2006}, they have been extensively studied from both partial differential equation and probabilistic perspectives, with a primary focus on the existence, uniqueness, and numerical computation of Nash equilibria \cite{BensoussanFrehseYam2013, CarmonaDelarue2018, AchdouCardaliaguet2020,AchdouCapuzzoDolcetta2010, CarmonaLauriere2021}.

In many applications, the parameters governing agent behavior are not directly observable. Instead, one typically has access to data in the form of state trajectories or aggregate statistics of the population. This naturally leads to \textit{inverse problems in MFGs}, where the objective is to recover underlying model parameters or optimal strategies from observed equilibria. Such problems arise in areas such as economics, engineering, and data-driven control, yet remain relatively underexplored.

Existing works on inverse MFG problems often rely on strong structural assumptions, such as direct access to the value function \cite{liu2023inverse,LiuLoZhang2025}. These assumptions may be restrictive in practical settings where only partial information, such as the evolution of the population distribution, is available~\cite{ZhangYangMouZhou2025}. This motivates the study of corresponding inverse problems relying primarily on observable quantities.

Closely related studies exist regarding the inverse problem in classical control theory \cite{JeanMaslovskayaCDC2018}, as well as its discrete-time stochastic equivalent in the context of maximum entropy inverse reinforcement learning \cite{ChenZiebartAISTATS2015}. Despite these developments in single-agent or finite-player settings, the literature addressing the inverse problem for LQ and LQG games remains rather scarce \cite{Corrigan1973,chen2024inverse,inga2019solution}.

In this paper, we consider inverse problems in the class of \textit{Linear-Quadratic-Gaussian (LQG) Mean Field Games}, where the population distribution is fully characterized by its mean and covariance. This class of games has been extensively studied and has found various applications, see e.g.~\cite{HuangMalhameCaines2006,bardi2012explicit,BensoussanSungYamYung2016,Graber2016}. The LQG structure allows us to formulate the inverse problem in terms of \textit{mean–covariance trajectories}, which are natural and observable (or approximately observable) quantities in many applications.

We address three fundamental questions:
(i) whether the optimal control is locally identifiable from observed initial conditions,
(ii) whether the optimal control is globally identifiable for arbitrary initializations, and
(iii) whether one can infer cost parameters that are 
consistent with the observed equilibria.

\arxiv{Our contributions are as follows. First, we establish existence and uniqueness of Nash equilibria under mild conditions in our formulation. Second, we analyze the \textit{injectivity of the mapping} from model parameters to equilibrium trajectories, showing that it is, in general, inherently non-injective. Third, we provide conditions under which the optimal control is locally identifiable from observed trajectories, and we characterize the information required for global identifiability. Finally, we propose a constructive method to recover \textit{consistent cost parameters} by combining identifiable quantities with a semidefinite programming formulation.

The remainder of the paper is organized as follows. Section \ref{section:background_pb} introduces the model and formalizes the inverse problems. Section \ref{section:existence-inj} studies existence, uniqueness, and injectivity properties. Section \ref{section:main_results} presents the main identifiability results and reconstruction methods. Section \ref{section:numerical} provides numerical experiments, and Section \ref{section:conclusion} concludes.}%
\cdc{Our contributions are as follows. After formalizing the model and the inverse problems (Section~\ref{section:background_pb}), we establish existence and uniqueness of Nash equilibria and analyze the \textit{injectivity of the mapping} from model parameters to equilibrium trajectories, showing that it is inherently non-injective (Section~\ref{section:existence-inj}). We then provide conditions under which the optimal control is locally identifiable, characterize the information required for global identifiability, and propose a constructive method to recover \textit{consistent cost parameters} by combining identifiable quantities with a semidefinite programming formulation (Section~\ref{section:main_results}); Section~\ref{section:conclusion} concludes. For brevity, several proofs are omitted or reduced to a sketch; the complete arguments, together with numerical experiments illustrating the method and an appendix recalling the Riccati results we use, are provided in the long version of this paper \cite{arxiv-version}.}

\section{Background and problem setting}
\label{section:background_pb}

\textbf{Notation.} Let $\NN = \{0,1,\dots\}$ denote the set of natural numbers and $\NN^* = \NN \setminus \{0\}$. For any $k \in \NN$, we define $[k] = \{0,\dots,k\}$ and $[k]^* = [k] \setminus \{0\}$. Let $\RR$ denote the set of real numbers, $\RR^* = \RR \setminus \{0\}$, $\RR_+ = \{x \in \RR \,:\, x \ge 0\}$, and $\RR^*_+ = \{x \in \RR \,:\, x > 0\}$.
For any $n \in \NN^*$, $\RR^n$ denotes the set of real-valued vectors of dimension $n$, and $\RR^{n\times n}$ the set of real-valued square matrices of size $n$. For any matrix $A \in \RR^{n\times n}$, $A^\dag$ denotes the transpose of $A$, $\det(A)$ its determinant, and if $\det(A)\neq 0$, $A^{-1}$ denotes its inverse. $\VEC(A) \in \RR^{n^2}$ denotes the vector formed by stacking the columns of $A$ into a single column vector.
We denote by $\bbS^n$ the set of symmetric matrices, $\bbS^n_+$ the set of symmetric positive semi-definite matrices, $\bbS^n_{++}$ the set of symmetric positive definite matrices, and $\bbS^n_-$ the set of symmetric negative semi-definite matrices. For any matrix $A, B \in \RR^{n\times n}$, we write $A\succeq B$ if and only if for any $x\in \RR^n$, $x^\dag A x \ge x^\dag B x$.
For any $n \in \NN$, we define the commutation matrix $\mathcal{T}_n \in \mathbb{R}^{n^2 \times n^2}$ as the unique linear operator such that $\mathcal{T}_n \VEC(A) = \VEC(A^\dag)$ for all $A \in \mathbb{R}^{n \times n}$.

\subsection{Dynamics and equilibrium}
\label{subsection:dynamics-and-equ}
Let $n \in \NN^*$. For simplicity, we assume the state and action spaces are both $\RR^n$, although one could consider different dimensions. We first define the space of parameters characterizing the agents' environment.

\begin{definition}
	The space of \textbf{admissible dynamics parameters} is defined by $\Gamma_1 = \{(A, B, \sigma) \in (\RR^{n\times n})^3: \det(B) \ne 0 \}$. The space of \textbf{admissible cost parameters} is defined by $\Gamma_2 = \{ (R, Q, Q_T, \bar Q, \bar Q_T) \in \bbS^n_{++}\times (\bbS^n_{+})^4\}$. The space of \textbf{admissible parameters} is then given by the product space $\Gamma = \Gamma_1 \times \Gamma_2$. A generic element of $\Gamma$ (resp. $\Gamma_1, \Gamma_2$) is denoted by $\gamma$ (resp. $\gamma_1, \gamma_2$).
\end{definition}

Let $\gamma \in \Gamma$ represent the parameters of the environment. For a fixed time horizon $T \in \RR^*_+$, let $X_t$ and $\alpha_t$ denote the state and control of a representative agent, respectively.
Let $\bar{x}_0 \in \RR^n$ and $\Sigma_0 \in \bbS_+^n$ be an initial mean and covariance matrix. The agent's state follows the stochastic differential equation:
\begin{equation}
	\label{eq:sde}
	dX_t = (A X_t + B\alpha_t)dt + \sigma dW_t, \quad X_0 \sim \cN(\bar{x}_0, \Sigma_0).
\end{equation}
The agent seeks to minimize the cost functional:
\begin{equation}
	\label{eq:cost-orig}
	\begin{split}
		\cJ(\alpha) = \EE \bigg[ &\int_0^T (X_t^\dag Q X_t + \alpha^\dag_t R \alpha_t + X_t^\dag \bar{Q} \bar{x}_t) dt \\
			&+ X^\dag_T Q_T X_T + X_T^\dag \bar{Q}_T \bar{x}_T \bigg].
	\end{split}
\end{equation}
Here, $\bar x_t$ denotes the mean state of a population of agents following the same control, evolving according to \eqref{eq:sde}. We say that a \textbf{Nash equilibrium} is reached when the agent distribution induced by the optimal control minimizing \eqref{eq:cost-orig} is equal to the population distribution. It is established in the literature that the population distribution satisfies a system of coupled partial differential equations, and under our setting, this distribution remains Gaussian. Consequently, the mean and covariance trajectory satisfies a deterministic system of differential equations. For brevity, we define a Nash equilibrium operationally through the forward-backward equation system governing the mean-covariance trajectory. We refer to \cite{CarmonaDelarue2018, BensoussanFrehseYam2013} for the broader PDE and game-theoretic context.
\begin{definition}
	Let $\gamma \in \Gamma$, and define $2\nu = \sigma \sigma^\dag$. We say that the mean-covariance trajectory $(\bar x, \Sigma)$ is a Nash equilibrium with respect to $\gamma$ if there exists a tuple $(P, r)$ such that $(P, \Sigma, \bar x, r)$ satisfies:
	\begin{align}
		\dot{P}      & + PA + A^\dag P - P B R^{-1} B^\dag P + Q = 0, \label{eq:p_dyn}                                   \\
		\dot{\Sigma} & = (A - B R^{-1} B^\dag P)\Sigma + \Sigma(A - B R^{-1} B^\dag P)^\dag + 2\nu, \label{eq:sigma_dyn} \\
		\dot{\bar x} & = (A  - B R^{-1} B^\dag P)\bar x - B R^{-1} B^\dag r, \label{eq:barx_dyn}                         \\
		-\dot{r}     & = (A^\dag - P B R^{-1} B^\dag)r + \frac{1}{2}\bar{Q}\bar x, \label{eq:r_dyn}
	\end{align}
	subject to the boundary conditions $P_T = Q_T$, $\bar x_0 = \bar{x}_0$, $\Sigma_{0} = \Sigma_0$, and $r_T = \frac{1}{2}\bar{Q}_T \bar{x}_T$.
\end{definition}

In Theorem~\ref{theorem:existence_uniqueness_mfg_lqg} we prove that for any $\gamma \in \Gamma$, there exists a unique tuple $(P, \Sigma, \bar x, r)$ satisfying \eqref{eq:p_dyn}-\eqref{eq:r_dyn}.
The optimal control is given by the feedback policy $\alpha_t = \pi(t, X_t)$, where $\pi(t, x) = -R^{-1}B^\dag ( P_t x + r_t)$ for each $(t, x) \in [0, T] \times \RR^n$, which justifies the next definition.
\begin{definition}
	For any admissible tuple of parameters $\gamma \in \Gamma$ let us define the \textbf{trajectory mapping}
	\begin{align*}
		\psi_\gamma: \RR^n \times \bbS^{n}_{++} & \to C^1([0, T], \RR^n) \times C^1([0, T], \bbS^n_{++})
		\\
		(\bar x_0, \Sigma_0)                    & \mapsto (\bar x, \Sigma),
	\end{align*}
	where $(\bar x, \Sigma)$ is the Nash equilibrium generated by $\gamma$ using initial condition $(\bar x_0, \Sigma_0)$. Let us define the \textbf{control mapping}
	\begin{align*}
		\varphi_\gamma: \RR^n \times \bbS^{n}_{++} & \to C^1([0, T], \RR^n) \times C^1([0, T], \RR^{n\times n})
		\\
		(\bar x_0, \Sigma_0)                      & \mapsto (R^{-1}B^\dag r, R^{-1}B^\dag P).
	\end{align*}
\end{definition}

\subsection{Objectives}

In this work, we focus on inverse problems concerning the observation of Nash equilibrium mean-covariance trajectories. Assuming an observer has access to several such trajectories starting with different initializations, we investigate the a priori information and the specific number of trajectory observations required to address the following queries:
\begin{enumerate}
	[label=\textbf{(Q\arabic*)},
		ref=\textbf{(Q\arabic*)},
		noitemsep,topsep=0pt]
	\item\label{theory-q1} \textit{Is it possible to recover the optimal control for the observed initializations?}
	\item\label{theory-q2} \textit{Is it possible to recover the optimal control for any arbitrary initialization?}
	\item\label{theory-q3} \textit{Is it possible to recover an admissible tuple of cost parameters $\gamma_2 \in \Gamma_2$ that reproduces the observed mean-covariance trajectories?}
\end{enumerate}

These three questions are distinct yet interrelated. \ref{theory-q1} is conceptually the most straightforward; here, the observer reconstructs the optimal control evaluated exactly along the observed evolution of the population's mean and covariance. In contrast, \ref{theory-q2} requires the observer to reconstruct the optimal control mapping comprehensively, allowing evaluation at new initializations where the macroscopic trajectory is not explicitly given. For initializations distant from the observed data, the observer risks lacking sufficient local information to infer the underlying structure. While \ref{theory-q3} pursues a deeper structural recovery than \ref{theory-q2}, both overarching questions pose significant analytical challenges regarding parameter identifiability.

We formalize the inverse problems with the following definitions.

\begin{definition}
	Let $\gamma \in \Gamma$ and a set of $k\in \NN^*$ initial conditions $\cI = \{(\bar{x}_0^i, \Sigma_0^i)\}_{i=1}^k$ be given. The \textbf{observation operator} $\psi_{\gamma}(\cI)$ is the mapping that assigns to $\gamma$ the corresponding mean-covariance trajectories:
	\begin{equation}
		\psi_{\gamma}(\cI) := \left( \psi_\gamma(\bar x^i_0, \Sigma^i_0) \right)_{i=1, \dots, k}
	\end{equation}
\end{definition}

\begin{definition}
	\label{definition:identifiable-control}
	Let $\hat{\Gamma} \subset \Gamma$. We say that the optimal control is \textbf{locally identifiable} over $\hat{\Gamma}$ from the observations $\psi_\gamma(\cI)$ if there exists a mapping $F$ such that for every $\gamma \in \hat{\Gamma}$ and $(\bar x_0, \Sigma_0) \in \cI$:
	\begin{equation}
		\label{eq:identifiable-control}
		\varphi_\gamma(\bar x_0, \Sigma_0) = F \big(\gamma_1, \psi_{\gamma}(\cI) \big).
	\end{equation}
	We say that the optimal control is \textbf{globally identifiable} over $\hat{\Gamma}$ from the observations $\psi_\gamma(\cI)$ if \eqref{eq:identifiable-control} is satisfied for every $\gamma \in \hat{\Gamma}$ and every initialization $(\bar x_0, \Sigma_0) \in \RR^n \times \bbS^n_{++}$.
\end{definition}

\begin{definition}
	Let $\eta: \Gamma \to \RR^M$ be a quantity of interest for some $M\in \NN^*$ and $\hat{\Gamma} \subset \Gamma$. We say that $\eta$ is \textbf{identifiable} with respect to the observations $\psi_\gamma(\cI)$ over $\hat{\Gamma}$ if there exists a mapping $F$ such that for every $\gamma \in \hat{\Gamma}$:
	\begin{equation}
		\eta(\gamma) = F \big( \gamma_1, \psi_{\gamma}(\cI) \big)
	\end{equation}
\end{definition}

\begin{definition}
	Let $\gamma_1\in \Gamma_1$ and $\gamma_2, \gamma_2'\in \Gamma_2$. We say that $\gamma_2$ and $\gamma_2'$ are \textbf{consistent with respect to} $\gamma_1$, if and only if $\psi_{(\gamma_1, \gamma_2)} = \psi_{(\gamma_1, \gamma_2')}$.
\end{definition}

Before addressing \ref{theory-q1}-\ref{theory-q3}, we establish the mathematical well-posedness of the forward Nash equilibrium and subsequently analyze the injectivity of the trajectory mapping $\psi$.

\section{Existence, uniqueness and injectivity}
\label{section:existence-inj}

\subsection{Existence and uniqueness of Nash equilibrium}

Because our primary focus is the inverse problem, we do not aim for maximal generality in establishing conditions for the system of equations. The system decomposes into two coupled Riccati-type boundary value problems (\eqref{eq:p_dyn}, \eqref{eq:barx_dyn}--\eqref{eq:r_dyn}) and one Sylvester equation \eqref{eq:sigma_dyn}. We refer to \cite{AbouKandilFreilingIonescuJank2003} for a comprehensive treatment of such equations\arxiv{, and we recall two crucial theorems from this text in Appendix~\ref{appendix:riccati}}.

We now state the existence and uniqueness of the Nash equilibrium.
\begin{theorem}
	There exists a unique solution $P \in C^1([0, T], \bbS_+^n)$ to \eqref{eq:p_dyn} satisfying $P_T = Q_T$.
\end{theorem}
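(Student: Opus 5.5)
The plan is the classical argument for the matrix Riccati differential equation, run backward in time; this is presumably the content of the Riccati result recalled from \cite{AbouKandilFreilingIonescuJank2003} in Appendix~\ref{appendix:riccati}. Write $S = BR^{-1}B^\dag$; since $R \in \bbS^n_{++}$, we have $S\in\bbS^n_+$. Reverse time with $\tau = T-t$ and $\tilde P_\tau = P_{T-\tau}$. Equation \eqref{eq:p_dyn} becomes the initial value problem
\[
\dot{\tilde P} = \tilde P A + A^\dag \tilde P - \tilde P S \tilde P + Q, \qquad \tilde P_0 = Q_T .
\]
The right-hand side is a polynomial, hence locally Lipschitz, in $\tilde P$. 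Cauchy--Lipschitz therefore gives a unique maximal $C^1$ solution on some interval $[0,\tau^*)$. Transposing the equation shows that $\tilde P^\dag$ solves the same problem, so uniqueness forces $\tilde P$ to be symmetric. Uniqueness of $P$ on $[0,T]$ also follows directly from this local uniqueness. What remains is global existence, $\tau^* > T$, together with $P_t \succeq 0$.

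Both facts come from two-sided a priori bounds, and the lower bound is the delicate one. To show $P\succeq 0$, I would use the linear-equation representation. Set $M_t = A - SP_t$ and let $\Phi$ be the transition matrix of $\dot y = M_t y$. Rewrite \eqref{eq:p_dyn} as
\[
\dot P + PM + M^\dag P + PSP + Q = 0 .
\]
Variation of constants then gives
\[
P_t = \Phi(T,t)^\dag Q_T \Phi(T,t) + \int_t^T \Phi(s,t)^\dag \big(Q + P_sSP_s\big)\Phi(s,t)\,ds ,
\]
which is positive semidefinite on the whole existence interval because $Q_T, Q \succeq 0$ and $S \succeq 0$.

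For the upper bound, I would compare with the Lyapunov equation obtained by dropping the quadratic term, $\dot U + UA + A^\dag U + Q = 0$ with $U_T = Q_T$. A short computation gives
\[
\frac{d}{dt}(U-P) + (U-P)M + M^\dag(U-P) = -PSP,
\]
which can be solved with the same transition matrix $\Phi$. The same representation then yields $U - P \succeq 0$. Equivalently, $x^\dag P_t x$ is the optimal LQ cost from $x$ at time $t$, and it is bounded by the cost of the zero control.

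Combining the two bounds gives $0 \preceq P_t \preceq U_t$. Here $U$ is the solution of a linear ODE, so it is continuous and bounded on $[0,T]$. Consequently $\|P_t\|$ stays bounded on any subinterval of $[0,T]$ where $P$ exists, the solution cannot blow up, and the maximal interval must contain $[0,T]$. This yields the unique $P \in C^1([0,T],\bbS^n_+)$.

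The main obstacle is making the comparison argument rigorous on the maximal interval. The representation formulas must be used only where $P$ already exists, and one has to check that the sign conclusions do not depend circularly on $P$: $\Phi$ depends on $P$, but only through a continuous coefficient, which is harmless. Alternatively, the whole statement follows at once from the Riccati existence theorem recalled in Appendix~\ref{appendix:riccati} (\ricattiB{} or \ricattiA{}), after checking its hypotheses: $Q_T \succeq 0$, $Q \succeq 0$, and $BR^{-1}B^\dag \succeq 0$.
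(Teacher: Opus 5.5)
Your overall architecture is sound: a local Cauchy--Lipschitz solution, symmetry by uniqueness, two-sided a priori bounds, and then no blow-up. The lower bound via the closed-loop representation with $M_t = A - SP_t$ is also correct.

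\textbf{The upper-bound step fails as written.} With the same $M = A - SP$, a direct computation gives
\[
\frac{d}{dt}(U-P) + (U-P)M + M^\dag(U-P) = -PSP - (U-P)SP - PS(U-P) = -\bigl(USP + PSU - PSP\bigr),
\]
not $-PSP$. The forcing $USP + PSU - PSP$ is not sign-definite in general, so solving with the closed-loop transition matrix $\Phi$ does not give $U - P \succeq 0$.

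The identity you want holds with the open-loop drift $A$ in place of $M$:
\[
\frac{d}{dt}(U-P) + (U-P)A + A^\dag(U-P) = -PSP, \qquad U_T - P_T = 0 .
\]
Hence, on the existence interval,
\[
U_t - P_t = \int_t^T e^{A^\dag(s-t)}\, P_s S P_s \, e^{A(s-t)}\,ds \succeq 0 .
\]
This is exactly the zero-control comparison you mention afterwards, and that remark is correct. With this repair, the bound $0 \preceq P_t \preceq U_t$ and the no-blow-up conclusion go through.

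\textbf{Comparison with the paper.} The paper's proof is simply your fallback. It applies \ricattiA{} with coefficients $A$, $S = BR^{-1}B^\dag$, $Q$ and terminal value $Q_T$, after noting these lie in $\bbS^n_+$ because $R \in \bbS^n_{++}$ and by the definition of $\Gamma_2$. Only \ricattiA{} is relevant here. \ricattiB{} is the boundary-value result used for the $(\bar x, r)$ system and says nothing about existence for \eqref{eq:p_dyn}.

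Your self-contained route essentially reproves \ricattiA{} in the constant-coefficient case. It is longer, but it exposes the mechanism and yields the explicit bound $P_t \preceq U_t$; the citation buys brevity.

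Finally, avoid the symbol $\tilde P$ for the time-reversed solution, since the paper reserves it for $R^{-1}B^\dag P$.
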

\begin{arxivonly}
\begin{proof}
	We refer to \ricattiA, which requires $BR^{-1} B^\dag \in \bbS^n_+$, $Q \in \bbS^n_+$, and $Q_T \in \bbS^n_+$. These conditions are satisfied since $R \in \bbS^n_{++}$ and $Q \in \bbS^n_+$ by the definition of $\Gamma$.
\end{proof}
\end{arxivonly}

Since \eqref{eq:sigma_dyn} is a linear differential equation in $\Sigma$, the Cauchy-Lipschitz theorem guarantees a unique forward solution for any given $P$.

The coupled system \eqref{eq:barx_dyn}--\eqref{eq:r_dyn} is non-trivial, as it involves a mixed initial condition for $\bar x$ and a terminal condition for $r$. This forms a linear boundary value problem associated with a Riccati differential equation.

\begin{theorem}
	\label{theorem:existence_uniqueness_mfg_lqg}
	For any $\gamma \in \Gamma$, there exists a unique solution to the coupled system \eqref{eq:barx_dyn}--\eqref{eq:r_dyn}.
\end{theorem}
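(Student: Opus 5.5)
With $P$ fixed as the unique solution of \eqref{eq:p_dyn} given by the previous theorem, write $K_t = A - BR^{-1}B^\dag P_t$ and $S = BR^{-1}B^\dag \in \bbS^n_+$. The system \eqref{eq:barx_dyn}--\eqref{eq:r_dyn} is then the linear two-point boundary value problem
\begin{align*}
\dot{\bar x} = K_t \bar x - S r, \qquad \dot r = -K_t^\dag r - \tfrac12 \bar Q \bar x,
\end{align*}
with $\bar x_0$ given and $r_T = \tfrac12 \bar Q_T \bar x_T$. The plan is to decouple it with a Riccati ansatz $r_t = \Pi_t \bar x_t$, where $\Pi$ is an $n\times n$ matrix, in general not symmetric because $\bar Q$ enters asymmetrically.

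Substituting the ansatz shows that it is consistent if and only if $\Pi$ solves the non-symmetric Riccati differential equation
\begin{align*}
\dot \Pi + \Pi K_t + K_t^\dag \Pi - \Pi S \Pi + \tfrac12 \bar Q = 0, \qquad \Pi_T = \tfrac12 \bar Q_T .
\end{align*}
Given a solution $\Pi \in C^1([0,T])$, we take $\bar x$ to be the unique solution of the linear forward ODE $\dot{\bar x} = (K_t - S\Pi_t)\bar x$ with initial value $\bar x_0$ (Cauchy--Lipschitz), and set $r = \Pi \bar x$. A direct check shows this pair solves the system, which gives existence.

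For uniqueness, suppose $(\bar x, r)$ is any solution and define $e_t = r_t - \Pi_t \bar x_t$. Differentiating and using both equations together with the Riccati equation for $\Pi$ yields the linear homogeneous ODE $\dot e = -(K_t^\dag - \Pi_t S) e$ with terminal condition $e_T = 0$. Hence $e \equiv 0$, so $r = \Pi \bar x$, and $\bar x$ is then uniquely determined by the forward ODE. Existence and uniqueness of the system therefore reduce to global existence of $\Pi$ on $[0,T]$; local existence follows from Cauchy--Lipschitz.

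The main obstacle is ruling out finite-time blow-up of $\Pi$ on $[0,T]$. The symmetric argument used for $P$ (Theorem~\ref{theorem:riccati_1}) does not apply directly, because $\bar Q$ and $\bar Q_T$ play the role of non-symmetric couplings. I would try two routes.
\begin{itemize}
\item \emph{Symmetric reduction.} Since $\bar Q, \bar Q_T \in \bbS^n_+$, the coefficient matrices have exactly the symmetric structure of a standard LQ Riccati equation: $K_t^\dag$ is the transpose of $K_t$, and $S \succeq 0$, $\tfrac12\bar Q \succeq 0$, $\tfrac12\bar Q_T \succeq 0$. So the equation for $\Pi$ is of the form covered by Theorem~\ref{theorem:riccati_1}, with $A$ replaced by the continuous time-varying matrix $K_t$. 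Uniqueness then forces $\Pi = \Pi^\dag$, and the theorem gives $\Pi_t \in \bbS^n_+$ on all of $[0,T]$. The only point to verify is that the cited theorem allows time-varying continuous coefficients, which is standard.
\item \emph{Fallback.} If that fails, I would use the linear Hamiltonian representation $\Pi_t = Y_t X_t^{-1}$, where $(X,Y)$ solves the associated linear system backward from $(I, \tfrac12 \bar Q_T)$. Blow-up is excluded by showing $X_t$ stays invertible: if $X_t v = 0$, an energy identity for $\tfrac{d}{dt}\langle X v, Y v\rangle$ would contradict the positivity of $S$, $\bar Q$ and $\bar Q_T$ (Theorem~\ref{theorem:riccati_2}).
\end{itemize}
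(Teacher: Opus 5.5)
Your proof is correct, but it decouples the boundary value problem from the opposite end than the paper does.

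\textbf{The paper's route.} The paper invokes Theorem~\ref{theorem:riccati_2}, which amounts to the forward ansatz $\bar x_t = Z_t r_t + \Phi_t \bar x_0$, with the Riccati equation \eqref{eq:Z} started at $Z_0 = 0$. It then has to do two further things:
\begin{enumerate}
\item time-reverse \eqref{eq:Z} so that Theorem~\ref{theorem:riccati_1} applies (to $-V$), which gives existence and $Z \preceq 0$;
\item check separately the terminal non-degeneracy $\det(\tfrac12 \bar Q_T Z_T - I_n) \neq 0$, using $\bar Q_T \succeq 0$ and $Z_T \preceq 0$.
\end{enumerate}

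\textbf{Your route.} You use the backward sweep $r_t = \Pi_t \bar x_t$. This puts the terminal coupling directly into the Riccati terminal datum $\Pi_T = \tfrac12 \bar Q_T$. As a result:
\begin{enumerate}
\item Theorem~\ref{theorem:riccati_1} applies verbatim, with $A_t = K_t$, $Q_t = \tfrac12 \bar Q$, $S_t = S$ and $F = \tfrac12 \bar Q_T$. No time reversal is needed. The theorem as stated already allows time-varying coefficients, and $K_t$ is continuous.
\item No determinant condition arises.
\item Your error variable $e = r - \Pi \bar x$ proves existence and uniqueness of the boundary value problem directly, instead of through the black-box Theorem~\ref{theorem:riccati_2}.
\end{enumerate}

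\textbf{What each buys.} Your argument is more elementary and makes the structure transparent: the system is the optimality system of a convex LQ problem, with symmetric feedback $\Pi_t \in \bbS^n_+$. It also makes Lemma~\ref{lemma:barx0_mapsto_barxT} immediate, since $\bar x_T$ is just the transition matrix of $\dot{\bar x} = (K_t - S\Pi_t)\bar x$ applied to $\bar x_0$. The paper's route instead produces the representation $\bar x_t = Z_t r_t + \Phi_t \bar x_0$, which is what its proof of that lemma relies on.

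\textbf{Correction to your write-up.} Since $\bar Q, \bar Q_T \in \bbS^n_+$ by definition of $\Gamma_2$, there is no asymmetry. Your remarks that $\Pi$ is ``in general not symmetric'' and that the argument used for $P$ ``does not apply directly'' are therefore wrong, as your own first bullet shows. That first route is already a complete proof. You should drop the hedging and the fallback.
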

\begin{cdconly}
\begin{proof}[Sketch]
	\ricattiB~establishes that \eqref{eq:barx_dyn}--\eqref{eq:r_dyn} admits a unique solution as soon as the Riccati equation
	\begin{equation}
		\label{eq:Z}
		\begin{split}
			\dot Z_t = & - BR^{-1}B^\dag + (A - BR^{-1}B^\dag P_t) Z_t \\
			&+ Z_t(A^\dag - P_t B R^{-1} B^\dag ) + \frac{1}{2} Z_t \bar Q Z_t
		\end{split}
	\end{equation}
	with $Z_0 = 0$ admits a solution on $[0, T]$ and $\det(\frac{1}{2}\bar Q_T Z_T - I_n) \ne 0$. Equation~\eqref{eq:Z} is a symmetric Riccati equation, which we solve by applying \ricattiA. The same theorem yields $Z_T \in \bbS^n_-$. 
	Since $\bar Q_T\in\bbS^n_+$, the spectrum of $\bar Q_TZ_T$ is non-positive, so the determinant condition follows.
\end{proof}
\end{cdconly}
\begin{arxivonly}
\begin{proof}
	We rely on \ricattiB in the appendix, which states that if the Riccati equation
	\begin{equation}
		\label{eq:Z}
		\begin{split}
			\dot Z_t = & - BR^{-1}B^\dag + (A - BR^{-1}B^\dag P_t) Z_t \\
			&+ Z_t(A^\dag - P_t B R^{-1} B^\dag ) + \frac{1}{2} Z_t \bar Q Z_t
		\end{split}
	\end{equation}
	with $Z_0 = 0$ admits a solution on $[0, T]$, and if $\det(\frac{1}{2}\bar Q_T Z_T - I_n) \ne 0$, then~\eqref{eq:barx_dyn} and \eqref{eq:r_dyn} admit a unique solution. To show the existence of a solution to~\eqref{eq:Z}, we apply \ricattiA. One subtlety is that \ricattiA is formulated with a terminal condition. Let us perform a time reversal in~\eqref{eq:Z} by defining $V_t = Z_{T-t}$, which satisfies:
	\begin{equation}
		\label{eq:Z_inv}
		\dot V_t = BR^{-1}B^\dag - U_t V_t  -V_tU_t^\dag - \frac{1}{2}V_t \bar Q V_t,
	\end{equation}
	with $V_T = 0$, and $U_t = (A - BR^{-1}B^\dag P_{T-t})$. We then apply \ricattiA to the dynamics of $-V$, noting that $BR^{-1}B^\dag \in \bbS^n_+$ and $\bar Q \in \bbS^n_+$.

	The theorem further implies $-V_t \in \bbS^n_+$, which implies $V_t \in \bbS^n_-$. Defining $Z_t = V_{T-t}$ for $t \in [0, T]$, $Z$ satisfies Equation~\eqref{eq:Z}. Finally, we verify $\det(\frac{1}{2}\bar Q_T Z_T - I_n) \ne 0$. This holds because $\bar Q_T \in \bbS^n_+$ and $Z_T \in \bbS^n_-$, implying $\bar Q_T Z_T$ has non-positive eigenvalues, which ensures that $(\frac{1}{2}\bar Q_T Z_T - I_n)$ has eigenvalues less than or equal to $-1$, and thus is invertible.
\end{proof}
\end{arxivonly}

We conclude with a lemma bridging the initial and terminal states.
Intuitively, the terminal position $\bar x_T$ encodes information regarding the terminal cost matrix $Q_T$. The following lemma confirms that observing trajectories spanning a basis of initial states prevents degenerate terminal states.

\begin{lemma}
	\label{lemma:barx0_mapsto_barxT}
	Let $\gamma \in \Gamma$.
	\begin{enumerate}
		\item The mapping $\bar x_0 \mapsto \bar x_T$ is linear, invertible, and independent of the initialization $\Sigma_0$.
		\item In particular, for any basis $(\bar x_0^1, \dots, \bar x_0^n)$ of $\RR^n$ and any $\Sigma_0^1, \dots, \Sigma^n_0 \in \bbS^n_{++}$, the terminal mean values $(\bar x^1_T, \dots, \bar x_T^n)$ form a basis of $\RR^n$.
	\end{enumerate}
\end{lemma}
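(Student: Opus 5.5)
Independence from $\Sigma_0$ is immediate: $P$ solves \eqref{eq:p_dyn} with $P_T=Q_T$ and does not involve $\Sigma$. The pair \eqref{eq:barx_dyn}--\eqref{eq:r_dyn} involves only $P$, $\bar x$, $r$ and the parameters, never $\Sigma$. By Theorem~\ref{theorem:existence_uniqueness_mfg_lqg} its solution is unique, so $\bar x_T$ is determined by $\bar x_0$ alone.

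For linearity I will use the Riccati decoupling behind the proof of Theorem~\ref{theorem:existence_uniqueness_mfg_lqg}. Write $K_t = A - BR^{-1}B^\dag P_t$ and $S = BR^{-1}B^\dag \in \bbS^n_+$. The system is a linear two-point boundary value problem, homogeneous except for the data $\bar x_0$, with $r_T = \frac12\bar Q_T\bar x_T$. Superposition combined with uniqueness then shows that $\bar x_0\mapsto(\bar x,r)$ is linear, and in particular $\bar x_0\mapsto\bar x_T$ is linear.

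For invertibility, the cleanest route is a forward decoupling $\bar x_t = \Phi_t \bar x_0 + Z_t r_t$, where $Z$ solves \eqref{eq:Z} with $Z_0=0$. Differentiating this ansatz and substituting \eqref{eq:barx_dyn}--\eqref{eq:r_dyn} shows that it holds exactly when $Z$ satisfies \eqref{eq:Z} and $\Phi$ satisfies $\dot\Phi_t = (K_t + \tfrac12 Z_t\bar Q)\Phi_t$ with $\Phi_0=I_n$. Since this is a linear ODE, $\Phi_T$ is invertible by Liouville's formula. At $t=T$, inserting $r_T=\frac12\bar Q_T\bar x_T$ gives
\begin{equation*}
(I_n - \tfrac12 Z_T\bar Q_T)\,\bar x_T = \Phi_T \bar x_0 .
\end{equation*}
The matrix on the left is invertible by the same argument as in Theorem~\ref{theorem:existence_uniqueness_mfg_lqg}: $Z_T\in\bbS^n_-$ and $\bar Q_T\in\bbS^n_+$, so $Z_T\bar Q_T$ has nonpositive real spectrum, since it is similar to $\bar Q_T^{1/2}Z_T\bar Q_T^{1/2}\preceq 0$. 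Hence $\bar x_T = (I_n-\tfrac12 Z_T\bar Q_T)^{-1}\Phi_T\bar x_0$ is a product of invertible matrices, which gives both linearity and invertibility at once. If signs in the ansatz differ from those I expect, I will redo the computation carefully; the sign convention of \eqref{eq:Z} is built for exactly this decoupling.

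The main obstacle is verifying this decoupling identity with the correct signs and making sure the $\bar Q$ term enters $\Phi$ as stated. That algebra is routine but error-prone. A fallback for invertibility is a direct kernel argument: suppose $\bar x_T=0$. Then $r_T=0$, and $r$ solves a backward linear ODE driven by $\bar x$. An energy identity obtained by differentiating $\langle r_t,\bar x_t\rangle$, using $S\succeq0$ and $\bar Q\succeq0$, may force $\bar x\equiv0$. However, this argument runs into trouble because $\bar Q$ is not symmetric-coupled, so I prefer the decoupling route.

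Part 2 follows immediately from part 1. The map $\bar x_0\mapsto\bar x_T$ is a single invertible linear map $M$, independent of the $\Sigma_0^i$, so $(M\bar x_0^1,\dots,M\bar x_0^n)$ is the image of a basis under an invertible linear map, and is therefore again a basis.
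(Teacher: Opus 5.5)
Your proposal is correct and is essentially the paper's proof: the same forward decoupling $\bar x_t = \Phi_t\bar x_0 + Z_t r_t$ with $\dot\Phi_t = (A - BR^{-1}B^\dag P_t + \frac12 Z_t\bar Q)\Phi_t$, the same terminal identity $(I_n - \frac12 Z_T\bar Q_T)\bar x_T = \Phi_T\bar x_0$, and the same sign argument from $Z_T\in\bbS^n_-$ and $\bar Q_T\in\bbS^n_+$. Your signs check out, so the hedging and the fallback are unnecessary; one small tightening is that showing $Z_T\bar Q_T \sim \bar Q_T^{1/2}Z_T\bar Q_T^{1/2}$ by conjugating with $\bar Q_T^{1/2}$ needs $\bar Q_T$ invertible, so for merely semidefinite $\bar Q_T$ use instead that $(Z_T\bar Q_T^{1/2})\bar Q_T^{1/2}$ and $\bar Q_T^{1/2}(Z_T\bar Q_T^{1/2})$ have the same characteristic polynomial, which still gives the nonpositive real spectrum you need.
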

\begin{arxivonly}
\begin{proof}
	The solution $P$ of Equation~\eqref{eq:p_dyn} is independent of the initialization $(\bar x_0, \Sigma_0)$. Building on the proof of Theorem~\ref{theorem:existence_uniqueness_mfg_lqg}, one can show that:
	$$
		\bar x_t = Z_t r_t + \Phi_t\bar{x}_0
	$$
	where $\Phi_t$ is the unique fundamental matrix satisfying $\dot \Phi_t = (\frac{1}{2}Z_t\bar Q + A - B R^{-1} B^\dag P_t)\Phi_t$ with $\Phi_0 = I_n$.
	At the terminal time $T$, and using $r_T = \frac{1}{2}\bar Q_T \bar x_T$, this yields:
	$$
		(I_n -\frac{1}{2} Z_T \bar Q_T) \bar x_T = \Phi_T\bar{x}_0.
	$$
	Since $\bar Q_T \in \bbS^n_+$ and $Z_T \in \bbS^n_-$, the matrix $(I_n -\frac{1}{2} Z_T \bar Q_T)$ has positive eigenvalues and is invertible. Thus:
	$$
		\bar x_T = (I_n - \frac{1}{2}Z_T \bar Q_T )^{-1}\Phi_T\bar{x}_0.
	$$
	The transition matrix $\Phi_T$ is clearly invertible, making the mapping $\bar x_0 \mapsto \bar x_T$ a composition of invertible linear operators.
\end{proof}
\end{arxivonly}

\subsection{Injectivity of the trajectory mapping}
\label{section:canonical_class}

From the viewpoint of inverse problems, a fundamental question is the following: \emph{Is it possible to find two distinct tuples of parameters $\gamma, \gamma' \in \Gamma$, with $\gamma \ne \gamma'$, such that the generated Nash equilibria are identical?}

We answer this question positively by showing that for any tuple of parameters $\gamma = (A, B, \sigma, R, Q, Q_T, \bar Q, \bar Q_T)$, there exists $R'$ such that $\gamma' = (A, I_n, \sigma, R', Q, Q_T, \bar Q, \bar Q_T)$ yields the same Nash equilibrium trajectories as $\gamma$.
In the remainder of this section, we assume \emph{the dynamics parameters $\gamma_1 \in \Gamma_1$ are fixed and known}.

\subsubsection{Control parameter}\;

The matrix $B$ dictates the control's authority over the state space. When $B$ is invertible, the agent possesses full actuation capability. We establish an exact equivalence between the original game and a normalized problem where $(B,R)$ is replaced by $B' = I_n$ and $R' = (B^\dag)^{-1} R B^{-1}$, preserving the resulting mean-covariance trajectory.

\begin{theorem}
	Let $\gamma = (\gamma_1, \gamma_2) \in \Gamma$ with $\gamma_1 = (A, B, \sigma)$ and $\gamma_2 = (R, Q, Q_T, \bar Q, \bar Q_T)$. Set $R' = (B^\dag)^{-1} R B^{-1}$, $\gamma_1' = (A, I_n, \sigma) \in \Gamma_1$, $\gamma_2' = (R', Q, Q_T, \bar Q, \bar Q_T) \in \Gamma_2$ and $\gamma' = (\gamma_1', \gamma_2') \in \Gamma$. Then $\psi_\gamma = \psi_{\gamma'}$.
\end{theorem}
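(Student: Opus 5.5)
The plan is to show that the forward--backward system \eqref{eq:p_dyn}--\eqref{eq:r_dyn} for $\gamma$ and for $\gamma'$ share the same tuple $(P,\Sigma,\bar x,r)$ for every initialization $(\bar x_0,\Sigma_0)$. Uniqueness, from the existence--uniqueness theorem for $P$ and Theorem~\ref{theorem:existence_uniqueness_mfg_lqg}, then gives $\psi_\gamma=\psi_{\gamma'}$.

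The key observation is that $B$, $R$ enter the system only through the single matrix $BR^{-1}B^\dag$. With $R'=(B^\dag)^{-1}RB^{-1}$ we have $R'^{-1}=BR^{-1}B^\dag$, so
\[
I_n R'^{-1} I_n^\dag = BR^{-1}B^\dag .
\]

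The steps are as follows.

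First, I check admissibility of $\gamma'$. Clearly $\det(I_n)\neq 0$, so $\gamma_1'\in\Gamma_1$. Also $R'$ is symmetric, and for $x\neq0$ we have $x^\dag R' x=(B^{-1}x)^\dag R(B^{-1}x)>0$ because $B^{-1}x\neq 0$. Hence $R'\in\bbS^n_{++}$, while the remaining cost matrices are unchanged, so $\gamma_2'\in\Gamma_2$.

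Second, I substitute the identity $R'^{-1}=BR^{-1}B^\dag$ into each of \eqref{eq:p_dyn}--\eqref{eq:r_dyn}. Every occurrence of $BR^{-1}B^\dag$ (including the forms $PBR^{-1}B^\dag$ in \eqref{eq:r_dyn}) is replaced verbatim by $I_nR'^{-1}I_n^\dag$. The matrices $A$, $\nu$, $Q$, $Q_T$, $\bar Q$, $\bar Q_T$ and all boundary conditions are identical in the two systems. Thus the two systems are literally the same ODE boundary value problem.

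Third, I conclude from uniqueness. The unique $P$ with $P_T=Q_T$ coincides for both parameter tuples, and then $\Sigma$ is unique by Cauchy--Lipschitz. The pair $(\bar x,r)$ is unique by Theorem~\ref{theorem:existence_uniqueness_mfg_lqg}, applied to both $\gamma$ and $\gamma'$, which is legitimate since both are admissible. Hence the mean--covariance trajectories agree for every initialization.

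There is no genuine obstacle here. The only point requiring care is ensuring $\gamma'\in\Gamma$ so that the uniqueness results apply, and in particular the positive definiteness of $R'$, which uses the invertibility of $B$.

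One might also remark that the control mappings differ: $R'^{-1}P=BR^{-1}B^\dag P$, whereas $\varphi_\gamma$ gives $R^{-1}B^\dag P$. They are related by the factor $B$, since $BR^{-1}B^\dag P = B\,(R^{-1}B^\dag P)$. This is consistent with the statement, which concerns only $\psi$.
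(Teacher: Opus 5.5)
Your proposal is correct and follows the paper's argument: the paper's entire proof is the observation that $BR^{-1}B^\dag = I_n R'^{-1} I_n$, so the forward--backward systems induced by $\gamma$ and $\gamma'$ are identical. Your admissibility check that $R' \in \bbS^n_{++}$ and your explicit appeal to uniqueness spell out steps the paper leaves implicit.
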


\begin{proof}
	Since $B R^{-1} B^\dag = I_n R'^{-1} I_n$, the Riccati system \eqref{eq:p_dyn}--\eqref{eq:r_dyn} induced by $\gamma'$ is exactly the same as the one induced by $\gamma$.
\end{proof}

\subsubsection{Cost parameters}\;

Assuming $\gamma_1 = (A, B, \sigma) \in \Gamma_1$ is fixed, any two sets of cost parameters $\gamma_2, \gamma_2' \in \Gamma_2$ generating identical optimal feedback policies $\alpha_t = \alpha'_t$ will trivially produce the same Nash equilibrium. Given the feedback form $\alpha_t = - R^{-1}B^\dag P_t X_t - R^{-1}B^\dag r_t$, the operator $R^{-1}B^\dag$ plays a defining role. For convenience, we define the transformed quantity
$$\tilde \eta = R^{-1}B^\dag \eta$$
for any parameter $\eta \in \{P, r, Q, Q_T, \bar Q, \bar Q_T\}$.

\begin{lemma}
	\label{lem:control-eq}
	Let $\gamma_2 = (R, Q, Q_T, \bar Q, \bar Q_T)\in \Gamma_2$ and $\gamma_2' = (R', Q', Q_T', \bar Q', \bar Q_T') \in \Gamma_2$. If 
	$$
		\tilde P_t = \tilde P'_t, \quad \forall t \in [0,T].
	$$
	and if, for every initial mean $\bar x_0\in\RR^n$, the corresponding solutions satisfy $\tilde r_t=\tilde r_t'$ for every $t\in[0,T]$, then $\gamma_2$ and $\gamma_2'$ are consistent with respect to $\gamma_1$.
\end{lemma}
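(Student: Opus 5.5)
We need to show that $\psi_{(\gamma_1,\gamma_2)}(\bar x_0,\Sigma_0)=\psi_{(\gamma_1,\gamma_2')}(\bar x_0,\Sigma_0)$ for every initialization $(\bar x_0,\Sigma_0)$. The plan is to observe that the cost parameters enter the forward equations \eqref{eq:sigma_dyn}--\eqref{eq:barx_dyn} only through the products $BR^{-1}B^\dag P_t = B\tilde P_t$ and $BR^{-1}B^\dag r_t = B\tilde r_t$. We then compare the two systems as forward ODEs whose coefficients coincide.

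\textbf{Step 1: fix the solutions.} Fix $(\bar x_0,\Sigma_0)$. By the first theorem of this section and Theorem~\ref{theorem:existence_uniqueness_mfg_lqg}, each parameter set admits a unique tuple, $(P,\Sigma,\bar x,r)$ for $\gamma_2$ and $(P',\Sigma',\bar x',r')$ for $\gamma_2'$. The hypothesis gives $\tilde P_t=\tilde P'_t$ for all $t$. It also gives $\tilde r_t=\tilde r'_t$ for the solutions associated with this $\bar x_0$. Note that $r$ is determined by $\bar x_0$ alone, since $P$ does not depend on the initialization and the coupled system for $(\bar x,r)$ involves neither $\Sigma$ nor $\Sigma_0$. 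Since $\gamma_1$ is shared, multiplying by $B$ yields $BR^{-1}B^\dag P_t = BR'^{-1}B^\dag P'_t$ and $BR^{-1}B^\dag r_t = BR'^{-1}B^\dag r'_t$.

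\textbf{Step 2: the mean.} Equation \eqref{eq:barx_dyn} for $\gamma_2$ reads $\dot{\bar x} = (A-B\tilde P_t)\bar x - B\tilde r_t$. For $\gamma_2'$ it reads $\dot{\bar x}' = (A-B\tilde P_t)\bar x' - B\tilde r_t$, with the same continuous coefficient and the same forcing term. This is a linear ODE with identical initial datum $\bar x_0$, so by Cauchy--Lipschitz uniqueness $\bar x=\bar x'$ on $[0,T]$.

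\textbf{Step 3: the covariance.} Similarly, \eqref{eq:sigma_dyn} becomes the linear Lyapunov-type ODE $\dot\Sigma=(A-B\tilde P_t)\Sigma+\Sigma(A-B\tilde P_t)^\dag+2\nu$ in both cases, with $\nu$ determined by $\sigma\in\gamma_1$ and the same $\Sigma_0$. Uniqueness again gives $\Sigma=\Sigma'$. Hence $\psi_{(\gamma_1,\gamma_2)}=\psi_{(\gamma_1,\gamma_2')}$.

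\textbf{Main obstacle.} The only delicate point is Step~1: making sure the hypothesis on $\tilde r$ applies to the actual coupled solutions for each initial mean. The backward equation \eqref{eq:r_dyn} is not solved independently; it is coupled to $\bar x$. We never need to relate the $r$-equations of the two problems, however. We only use that, for the given $\bar x_0$, the transformed forcing terms agree as functions of $t$, and then treat \eqref{eq:barx_dyn} purely as a forward equation. Uniqueness from Theorem~\ref{theorem:existence_uniqueness_mfg_lqg} guarantees these are the unique equilibrium objects, so no fixed-point argument is needed.
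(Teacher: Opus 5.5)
Your proposal is correct and takes essentially the same approach as the paper. The paper's one-line proof likewise argues that once $B\tilde P$ and $B\tilde r$ coincide, $(\bar x,\Sigma)$ and $(\bar x',\Sigma')$ satisfy identical linear forward equations with the same initial data, so uniqueness forces equality; you simply make explicit the details it leaves implicit, such as $r$ depending only on $\bar x_0$ and not on $\Sigma_0$.
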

\begin{arxivonly}
\begin{proof}
	Under these conditions, the trajectories $(\bar x, \Sigma)$ and $(\bar x', \Sigma')$ satisfy identical linear differential equations with matching initial conditions, yielding global equality.
\end{proof}
\end{arxivonly}
We now explore sufficient structural conditions for distinct cost parameters $\gamma_2, \gamma_2'$ to satisfy Lemma~\ref{lem:control-eq}.

\begin{theorem}
	\label{theorem:tildeQ=tildeQ'_cA=cA'_psi=psi'}
	Let $\gamma_2 \in \Gamma_2$. For any $\gamma_2' \in \Gamma_2$,  if
	\begin{align}
		(\tilde{Q}, \tilde{Q}_T, \tilde{\bar Q}, \tilde{\bar Q}_T)
		 & = (\tilde{Q}', \tilde{Q}_T', \tilde{\bar Q}', \tilde{\bar Q}_T'),
		\label{eq:tildeQ=tildeQ'}                                            \\
		R^{-1}B^\dag A^\dag (B^\dag)^{-1}R
		 & = R'^{-1}B^\dag A^\dag (B^\dag)^{-1}R'.
		\label{eq:cA=cA'}
	\end{align}
	are satisfied, then $\gamma_2$ and $\gamma_2'$ are consistent with respect to $\gamma_1$.
\end{theorem}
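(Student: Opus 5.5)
The plan is to verify the two hypotheses of Lemma~\ref{lem:control-eq}, namely $\tilde P = \tilde P'$ on $[0,T]$ and $\tilde r = \tilde r'$ for every initial mean. In both cases I would show that the tilde quantities satisfy a closed system of differential equations whose coefficients involve only $\gamma_1$, the tilde cost parameters, and the conjugated matrix $\mathcal A := R^{-1}B^\dag A^\dag (B^\dag)^{-1}R$. Hypotheses \eqref{eq:tildeQ=tildeQ'}--\eqref{eq:cA=cA'} then make these systems identical for $\gamma_2$ and $\gamma_2'$, and uniqueness of solutions gives the equalities. Throughout, write $M := R^{-1}B^\dag$, which is invertible since $B$ is, and note $BR^{-1}B^\dag P = B\tilde P$.

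For $P$, multiply \eqref{eq:p_dyn} on the left by $M$. Each term transforms as follows:
\begin{itemize}
\item $M\dot P = \dot{\tilde P}$ and $MQ = \tilde Q$;
\item $MPA = \tilde P A$;
\item $MPBR^{-1}B^\dag P = \tilde P B \tilde P$;
\item $MA^\dag P = M A^\dag M^{-1}\tilde P = \mathcal A\tilde P$, using $M^{-1} = (B^\dag)^{-1}R$.
\end{itemize}
Hence $\tilde P$ solves
\[
\dot{\tilde P} + \tilde P A + \mathcal A \tilde P - \tilde P B\tilde P + \tilde Q = 0, \qquad \tilde P_T = \tilde Q_T .
\]
This is a matrix ODE with locally Lipschitz (quadratic) right-hand side. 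Both $\tilde P$ and $\tilde P'$ exist on all of $[0,T]$ by the preceding existence theorem, and they solve the same terminal value problem by \eqref{eq:tildeQ=tildeQ'}--\eqref{eq:cA=cA'}. Cauchy--Lipschitz uniqueness therefore gives $\tilde P = \tilde P'$. Note that $\tilde P$ is not assumed symmetric, so the argument must avoid any symmetrization.

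For the mean-field part, \eqref{eq:barx_dyn} becomes $\dot{\bar x} = (A - B\tilde P)\bar x - B\tilde r$. Multiplying \eqref{eq:r_dyn} by $M$ gives $MA^\dag r = \mathcal A\tilde r$ and $MPBR^{-1}B^\dag r = \tilde P B\tilde r$, so
\[
-\dot{\tilde r} = (\mathcal A - \tilde P B)\tilde r + \tfrac12 \tilde{\bar Q}\bar x, \qquad \tilde r_T = \tfrac12\tilde{\bar Q}_T\bar x_T .
\]
Thus $(\bar x,\tilde r)$ solves a linear two-point boundary value problem whose data coincide for $\gamma_2$ and $\gamma_2'$. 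It remains to show this BVP has a unique solution; once that holds, $\tilde r = \tilde r'$ (and $\bar x = \bar x'$) for every $\bar x_0$, and Lemma~\ref{lem:control-eq} concludes.

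The main obstacle is this uniqueness, since Theorem~\ref{theorem:existence_uniqueness_mfg_lqg} is stated in the untransformed variables. It does transfer directly: $r = M^{-1}\tilde r$ is a linear bijection between solutions $(\bar x, r)$ of \eqref{eq:barx_dyn}--\eqref{eq:r_dyn} for $\gamma$ and solutions $(\bar x,\tilde r)$ of the transformed BVP. The transformed BVP is the same for both parameter sets, and the corresponding original systems each have a unique solution. So if the transformed BVP had two solutions, pulling back by $M^{-1}$ would give two solutions of the original system for $\gamma$, a contradiction. This yields uniqueness, and hence equality of $\tilde r$ and $\tilde r'$.
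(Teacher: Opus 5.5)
Your proof is correct and follows the paper's argument: you rewrite the Riccati equation in the tilde variables with $\mathcal A = R^{-1}B^\dag A^\dag (B^\dag)^{-1}R$, obtain $\tilde P = \tilde P'$ by Cauchy--Lipschitz, show the transformed boundary value problem for $(\bar x, \tilde r)$ has a unique solution, and conclude with Lemma~\ref{lem:control-eq}. The only difference is the uniqueness step: you pull solutions back through the bijection $\tilde r \mapsto (B^\dag)^{-1}R\,\tilde r$ to the original system and invoke Theorem~\ref{theorem:existence_uniqueness_mfg_lqg}, whereas the paper re-verifies the hypotheses of the Riccati boundary-value theorem via $\tilde Z = Z(B^\dag)^{-1}R$ and a similarity argument for the determinant condition; both rest on the same conjugation, and yours is slightly more direct.
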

\begin{proof}
	Assume \eqref{eq:tildeQ=tildeQ'} and \eqref{eq:cA=cA'} hold. Let $\cA = R^{-1} B^\dag A^\dag ( B^\dag)^{-1} R$. The Riccati equations for $\tilde P$ and $\tilde P'$ become:
	\begin{align*}
		\dot{\tilde P}_t  & = -\tilde P_t A - \cA \tilde P_t + \tilde P_t B \tilde P_t - \tilde Q,     \\
		\dot{\tilde P}_t' & = -\tilde P_t' A - \cA \tilde P_t' + \tilde P_t' B \tilde P_t' - \tilde Q.
	\end{align*}
	subject to $\tilde P_T = \tilde P'_T = \tilde Q_T$. By the Cauchy-Lipschitz theorem, the assumed existence of a bounded solution yields $\tilde P \equiv \tilde P'$. The pair $(\bar x, \tilde r)$ thus satisfies
	\begin{equation}
		\label{eq:barx-tilder}
		\begin{cases}
			\dot{\bar x}_t = (A - B\tilde P_t)\bar x_t - B\tilde r_t, \\
			-\dot{\tilde{r}}_t = (\cA - \tilde P_t B)\tilde r_t + \dfrac{1}{2}\tilde{\bar Q}\bar x_t.
		\end{cases}
	\end{equation}
	with $\bar x_0 = \bar x_0$ and $\tilde r_T = \frac{1}{2} \tilde{\bar Q}_T \bar x_T$.
	Note that $(\bar x', \tilde r')$ also solves the last equation. Yet, \ricattiB gives uniqueness of the solution if there exists a solution to the Riccati equation
	\begin{equation}
		\label{eq:tildeZ}
		\dot{\tilde Z}_t = - B  + (A - B \tilde P_t) \tilde Z_t + \tilde Z_t(\cA -  \tilde P_t B) + \frac{1}{2}\tilde Z_t \tilde{\bar Q}\tilde Z_t,
	\end{equation}
	with $\tilde Z_0 = 0$, and that $\det(\frac{1}{2}\tilde{\bar Q}_T\tilde Z_T - I_n)\ne 0$. Let $Z$ be the solution of \eqref{eq:Z}. We proved that $\det (\frac{1}{2}\bar Q_T Z_T - I_n)\ne 0$. That implies that $\det(\frac{1}{2}R^{-1}B^\dag \bar Q_T Z_T  (B^\dag)^{-1} R - I_n)\ne 0$ since $R, B$ are invertible. The last expression can be written $\det(\frac{1}{2}\tilde{\bar Q}_T Z_T  (B^\dag)^{-1} R  - I_n)\ne 0$. Furthermore, $Z (B^\dag)^{-1} R$ verifies \eqref{eq:tildeZ}. We deduce that the boundary value problem \eqref{eq:barx-tilder} admits a unique solution. Therefore, $(\bar x, \tilde r) = (\bar x', \tilde r')$. We conclude with Lemma~\ref{lem:control-eq}.
\end{proof}

\begin{remark}
	We highlight a few situations that satisfy or reduce the assumptions of Theorem~\ref{theorem:tildeQ=tildeQ'_cA=cA'_psi=psi'}:
	\begin{itemize}
		\item The theorem holds specifically for $\gamma_2' = \lambda \gamma_2$ with $\lambda >0$.
		\item When $A = \lambda I_n$, $\lambda \in \RR$, \eqref{eq:cA=cA'} is satisfied immediately.
	\end{itemize}
\end{remark}

\begin{remark}
	Equation~\eqref{eq:tildeQ=tildeQ'} does not encompass all constraints on $\gamma_2'$, as the cost matrices must remain positive semi-definite. Consequently, finding an $R' \in \bbS^n_{++}$ that satisfies \eqref{eq:cA=cA'} and scaling the remaining matrices via $(B^\dag)^{-1} R' \tilde \eta$ does not automatically guarantee $\gamma_2' \in \Gamma_2$.
\end{remark}

\section{Main results}
\label{section:main_results}

We first provide a high-level description of our method, and subsequently address Question~\ref{theory-q1}, followed by Questions~\ref{theory-q2}--\ref{theory-q3}.

\subsection{High-level description}

Let $\gamma \in \Gamma$ and $k \in \NN^*$. Assume an observer has access to exact continuous-time trajectories $\psi_\gamma(\cI)$ originating from initializations $\cI = \{(\bar x_0^i, \Sigma_0^i)\}_{i = 1}^k$. The observer aims to: first, obtain the optimal controls associated with these observations \ref{theory-q1}; second, recover the optimal control for any new initialization that may not have been observed \ref{theory-q2}; and finally, infer cost parameters consistent with the observations \ref{theory-q3}.

To answer \ref{theory-q1}, the observer seeks to recover $\varphi_\gamma(\bar x_0^i, \Sigma_0^i) = (\tilde r^i, \tilde P)$ for $i \in [k]^*$. Let us rewrite \eqref{eq:p_dyn}--\eqref{eq:r_dyn} in terms of the "tilde" quantities:
\begin{align}
	\dot{\tilde P} + \tilde PA + \cA\tilde P - \tilde P B \tilde P + \tilde Q      & = 0, \label{eq:p_dyn_tilde}     \\
	\dot{\Sigma} - (A - B \tilde P)\Sigma - \Sigma(A - B \tilde P)^\dag - 2\nu     & = 0, \label{eq:sigma_dyn_tilde} \\
	\dot{\bar x} - (A - B \tilde P)\bar x + B \tilde r                             & = 0, \label{eq:barx_dyn_tilde}  \\
	\dot{\tilde r} + (\cA - \tilde P B)\tilde r + \frac{1}{2}\tilde{\bar{Q}}\bar x & = 0, \label{eq:r_dyn_tilde}
\end{align}
subject to $\tilde P_T = \tilde Q_T$ and $\tilde r_T = \frac{1}{2}\tilde{\bar Q}_T\bar x_T$.
A key observation is that $\tilde P$ does not depend on the initialization and can be expressed solely as a function of $\gamma$. Therefore, if the observer recovers $\tilde P$ from any $i \in [k]^*$, then "half" of the optimal control is known for every initialization. Once $\tilde P$ is identified, the observer can utilize \eqref{eq:barx_dyn_tilde} to recover $\tilde r^i$ for each $i \in [k]^*$, since $\bar x^i$ is directly accessible.

To address \ref{theory-q2}, the observer seeks $\varphi_\gamma(\bar x_0, \Sigma_0) = (\tilde r, \tilde P)$ for an arbitrary $(\bar x_0, \Sigma_0) \in \RR^n \times \bbS^n_{++}$.
While $\tilde P$ is already recovered from $\psi_\gamma(\cI)$, recovering $\tilde r$ via \eqref{eq:barx_dyn_tilde} requires computing $\bar x$, which is not directly observed for a new initialization. Consequently, the observer must find cost parameters consistent with the observed trajectories, thereby necessitating a resolution to \ref{theory-q3}.

Regarding the recovery of $\tilde P$, utilizing the covariance dynamics~\eqref{eq:sigma_dyn_tilde} is promising, as they are linear in $\tilde P$ and involve no other unknown quantities. However, the symmetry of $\Sigma$ introduces a constraint: a single observed trajectory may only identify $\frac{n(n+1)}{2}$ coefficients of $\tilde P_t$ when $n > 1$. Thus, observing two trajectories from sufficiently diverse initializations is generally required. Definition~\ref{definition:hatGamma1} formalizes the required informativeness of these trajectories.
\subsection{Local identifiability of controls}

We now present the main results regarding \ref{theory-q1}.

\begin{definition}
	\label{definition:hatGamma1}
	Let $k \in \NN^*$ and $\cI = \{(\bar x_0^i, \Sigma_0^i)\}_{i=1}^k$. Let $\hat \Gamma_1(\cI) \subset \Gamma$ be the set of parameters $\gamma$ such that for any $t \in [0, T]$, the matrix
	$$
		\kappa_t = \begin{bmatrix} \Sigma_t^1 \otimes I_n + (I_n \otimes \Sigma_t^1)\cT_n \\ \vdots \\ \Sigma_t^k \otimes I_n + (I_n \otimes \Sigma_t^k)\cT_n \end{bmatrix} \in \RR^{kn^2 \times n^2}
	$$
	is of rank $n^2$. We recall that $\cT_n$ is defined in the notation paragraph of Section~\ref{section:background_pb}.
\end{definition}

\begin{lemma}
	\label{lemma:tildeP_identifiable}
	Let $k \in \NN^*$ such that $\min(2, n) \leq k$, and let $\cI = \{(\bar x_0^i, \Sigma_0^i)\}_{i=1}^k$. For every $t\in[0,T]$, the matrix $\tilde P_t$ is identifiable from the knowledge of $(A, B, \sigma)$ and the observations $\psi_\gamma(\cI)$ over $\hat\Gamma_1(\cI)$. Explicitly,  
	$$
	\VEC(B \tilde P_t) = \arg \min_{L \in \RR^{n^2}} \| \kappa_t L - \kappa_t'\|^2,
	$$
	where $\kappa_t$ is as in Definition~\ref{definition:hatGamma1}, $\Upsilon^i_t = -\dot{\Sigma}^i_t + A\Sigma^i_t + \Sigma^i_t A^\dag + 2\nu$ and $\kappa_t ' = \bigl(\VEC(\Upsilon^1_t)^\dag, \dots, \VEC(\Upsilon^k_t)^\dag\bigr)^\dag \in \RR^{kn^2}.$
\end{lemma}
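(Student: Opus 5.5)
The plan is to vectorize the covariance equation~\eqref{eq:sigma_dyn_tilde}, which is linear in the unknown $B\tilde P_t$, and then use the rank condition defining $\hat\Gamma_1(\cI)$ to invert the resulting linear system.

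First, fix $t\in[0,T]$ and $i\in[k]^*$, and write $L_t = B\tilde P_t$. Equation~\eqref{eq:sigma_dyn_tilde} rearranges to
\begin{equation*}
L_t\Sigma^i_t + \Sigma^i_t L_t^\dag = -\dot\Sigma^i_t + A\Sigma^i_t + \Sigma^i_t A^\dag + 2\nu = \Upsilon^i_t .
\end{equation*}
The right-hand side involves only $(A,\sigma)$ and the observed trajectory $\Sigma^i$, which is $C^1$, so $\dot\Sigma^i_t$ is available. Hence $\Upsilon^i_t$ is a function of $\gamma_1$ and $\psi_\gamma(\cI)$.

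Second, apply the identities $\VEC(XYZ) = (Z^\dag\otimes X)\VEC(Y)$ and $\VEC(L^\dag)=\cT_n\VEC(L)$. Since $\Sigma^i_t$ is symmetric,
\begin{equation*}
\VEC(L_t\Sigma^i_t) = (\Sigma^i_t\otimes I_n)\VEC(L_t), \qquad \VEC(\Sigma^i_t L_t^\dag) = (I_n\otimes\Sigma^i_t)\cT_n\VEC(L_t).
\end{equation*}
Stacking over $i=1,\dots,k$ gives the consistent linear system $\kappa_t \VEC(L_t) = \kappa_t'$. By definition of $\hat\Gamma_1(\cI)$, the matrix $\kappa_t$ has full column rank $n^2$. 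Therefore $\kappa_t^\dag\kappa_t$ is invertible, and the least-squares problem $\min_L\|\kappa_t L-\kappa_t'\|^2$ has the unique minimizer $(\kappa_t^\dag\kappa_t)^{-1}\kappa_t^\dag\kappa_t'$. Because the system is consistent, this minimizer equals $\VEC(B\tilde P_t)$ with zero residual. Finally, since $\det B\neq 0$, we recover $\tilde P_t = B^{-1}L_t$. The map $F$ built this way depends only on $\gamma_1$ and $\psi_\gamma(\cI)$, which is exactly the required identifiability.

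The main obstacle is the hypothesis $\min(2,n)\le k$, which is really a consistency remark on when $\hat\Gamma_1(\cI)$ can be nonempty rather than an ingredient of the argument. For $n\ge 2$ and $k=1$, the map $L\mapsto L\Sigma+\Sigma L^\dag$ takes values in $\bbS^n$, so its rank is at most $n(n+1)/2<n^2$, and the rank condition can never hold. For $n=1$, $\kappa_t = 2\Sigma^1_t>0$, so a single trajectory suffices. I would mention this briefly to justify the hypothesis, and otherwise rest the proof entirely on the rank condition. I would also check that the stacking convention of $\cT_n$ matches the one given in the notation paragraph.
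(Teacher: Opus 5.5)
Your proposal is correct and follows essentially the same route as the paper. You rewrite the covariance equation as $B\tilde P_t\Sigma^i_t+\Sigma^i_t(B\tilde P_t)^\dag=\Upsilon^i_t$, vectorize it using $\cT_n$, stack the $k$ trajectories into $\kappa_t\VEC(B\tilde P_t)=\kappa_t'$, and conclude with the rank-$n^2$ condition defining $\hat\Gamma_1(\cI)$ together with the invertibility of $B$; your normal-equation justification of the unique zero-residual minimizer, and your remark that $\min(2,n)\le k$ only ensures $\hat\Gamma_1(\cI)$ can be nonempty (where the paper computes the kernel $\{S\Sigma_t^{-1}: S=-S^\dag\}$), are slightly more explicit than the paper but do not change the argument.
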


\begin{arxivonly}
\begin{proof}
	Equation~\eqref{eq:sigma_dyn_tilde} implies for each $t \in [0, T]$:
	\begin{equation*}
		B\tilde P_t \Sigma_t + \Sigma_t \tilde{P}^\dag_t B^\dag = -\dot{\Sigma}_t + A\Sigma_t + \Sigma_t A^\dag + 2\nu.
	\end{equation*}
	The kernel of the mapping $H \mapsto H \Sigma_t + \Sigma_t H^\dag$ is $\{ S\Sigma_t^{-1} \mid S = -S^\dag \}$ and we deduce that its rank is $\frac{n(n+1)}{2}$; thus, a single trajectory is insufficient to identify $\tilde P$ for $n>1$. Vectorizing the equation gives:
	\begin{equation*}
		(\Sigma_t \otimes I_n + (I_n \otimes \Sigma_t)\cT_n) \VEC(B\tilde P_t) = \VEC(\Upsilon_t),
	\end{equation*}
	where $\Upsilon_t = -\dot{\Sigma}_t + A\Sigma_t + \Sigma_t A^\dag + 2\nu$. For the $k$ trajectories $\psi_\gamma(\cI)$, $\tilde P_t$ satisfies:
	\begin{equation*}
		\kappa_t \VEC(B\tilde{P}_t) = \begin{bmatrix} \VEC(\Upsilon_t^1) \\ \vdots \\ \VEC(\Upsilon_t^k) \end{bmatrix},
	\end{equation*}
	where $\kappa$ is defined in Definition~\ref{definition:hatGamma1}.
	Denoting the right-hand term by $\kappa_t'$, we deduce:
	\begin{equation*}
		\VEC(B\tilde P_t) = \arg\min_{L \in \RR^{n^2}} \| \kappa_t L - \kappa'_t \|^2.
	\end{equation*}
	Since $B$ is invertible and $\kappa_t$ is of rank $n^2$ by definition of $\hat \Gamma_1(\cI)$, the identification of $\tilde P_t$ follows.
\end{proof}
\end{arxivonly}

\begin{remark}
	The minimal value of $k$ such that $\hat\Gamma_1(\cI)$ is non-empty is at least $2$ when $n > 1$, and $1$ otherwise.
\end{remark}

\begin{theorem}
	\label{theorem:locally-identifiable}
	Let $k \in \NN^*$ such that $\min(2, n) \leq k$, and let $\cI = \{(\bar x_0^i, \Sigma_0^i)\}_{i=1}^k$. The optimal control is locally identifiable over $\hat\Gamma_1(\cI)$ of Definition~\ref{definition:hatGamma1} with respect to the knowledge $(A, B, \sigma)$ and the observations $\psi_\gamma(\cI)$.
\end{theorem}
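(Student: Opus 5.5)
The plan is to build the map $F$ of Definition~\ref{definition:identifiable-control} explicitly, in two stages: first recover $\tilde P$, then recover each $\tilde r^i$. The parameters $\gamma_1=(A,B,\sigma)$ and the observed trajectories $\psi_\gamma(\cI)=(\bar x^i,\Sigma^i)_{i=1}^k$ will be the only inputs. For an initialization $(\bar x_0^i,\Sigma_0^i)\in\cI$ we must output $\varphi_\gamma(\bar x_0^i,\Sigma_0^i)=(\tilde r^i,\tilde P)$.

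\emph{Step 1: recovering $\tilde P$.} We invoke Lemma~\ref{lemma:tildeP_identifiable}. Since $\gamma\in\hat\Gamma_1(\cI)$, the matrix $\kappa_t$ has full column rank $n^2$ for every $t\in[0,T]$. Hence the least-squares problem has the unique solution
\[
\VEC(B\tilde P_t)=(\kappa_t^\dag\kappa_t)^{-1}\kappa_t^\dag\kappa_t'.
\]
Here $\kappa_t'$ is built from $\Sigma^i_t$, $\dot\Sigma^i_t$, $A$ and $\nu=\frac12\sigma\sigma^\dag$, all of which are available: the trajectories are $C^1$, so their derivatives are determined by the observation. Because $B$ is invertible, $\tilde P_t=B^{-1}\,\mathrm{unvec}\big((\kappa_t^\dag\kappa_t)^{-1}\kappa_t^\dag\kappa_t'\big)$. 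This expression depends only on $(\gamma_1,\psi_\gamma(\cI))$, and it is the same for every element of $\cI$, since $\tilde P$ does not depend on the initialization.

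\emph{Step 2: recovering $\tilde r^i$.} For each $i$, equation~\eqref{eq:barx_dyn_tilde} along the $i$-th observed mean trajectory gives $B\tilde r^i_t=-\dot{\bar x}^i_t+(A-B\tilde P_t)\bar x^i_t$. Again using that $B$ is invertible,
\[
\tilde r^i_t=B^{-1}\big((A-B\tilde P_t)\bar x^i_t-\dot{\bar x}^i_t\big).
\]
The right-hand side involves only $\gamma_1$, the observed $\bar x^i$ and its derivative, and the $\tilde P$ obtained in Step 1.

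\emph{Conclusion and main point of care.} Setting $F(\gamma_1,\psi_\gamma(\cI))$ to be the pair built in Steps 1--2 for the corresponding index $i$ gives exactly $\varphi_\gamma(\bar x^i_0,\Sigma^i_0)=(R^{-1}B^\dag r^i,R^{-1}B^\dag P)$. This proves local identifiability. The only delicate point is that the formula must not secretly depend on $\gamma_2$. This is guaranteed because $\gamma_2$ enters Steps 1--2 only through the true $(\tilde P,\tilde r^i)$, and these are pinned down uniquely by the full-rank condition on $\kappa_t$ and by the invertibility of $B$.

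For the resulting functions to be the claimed $C^1$ objects, two further checks are needed. First, $t\mapsto(\kappa_t^\dag\kappa_t)^{-1}$ is continuous, since $\kappa_t$ is continuous with full rank everywhere on $[0,T]$. Second, the regularity of $\tilde P$ and $\tilde r^i$ as elements of $C^1$ is inherited from their being solutions of \eqref{eq:p_dyn_tilde} and \eqref{eq:r_dyn_tilde}; it does not have to be established from the reconstruction formulas.
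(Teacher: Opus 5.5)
Your proposal is correct and follows essentially the same route as the paper. You recover $\tilde P_t$ from Lemma~\ref{lemma:tildeP_identifiable} using the full-rank condition defining $\hat\Gamma_1(\cI)$, then read off $\tilde r^i_t = B^{-1}\bigl[(A - B\tilde P_t)\bar x^i_t - \dot{\bar x}^i_t\bigr]$ from \eqref{eq:barx_dyn_tilde}; the explicit normal-equations formula and the regularity remarks only add detail that the paper leaves implicit.
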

\begin{proof}
	According to Lemma~\ref{lemma:tildeP_identifiable}, $\tilde P_t$ is identifiable for every $t\in[0,T]$ from the knowledge of $(A, B, \sigma)$ and the observations $\psi_\gamma(\cI)$ over $\hat\Gamma_1(\cI)$. Rearranging the terms in \eqref{eq:barx_dyn_tilde} for each trajectory $i \in [k]^*$ yields for any $t\in [0, T]$:
	$
		\tilde r^i_t = B^{-1} [ (A  - B \tilde P_t) \bar x^i_t - \dot{\bar x}^i_t ].
	$
	This concludes the proof.
\end{proof}

We now address Questions~\ref{theory-q2}-\ref{theory-q3}.

\subsection{Global identifiability and consistent cost parameters}

Section~\ref{section:canonical_class} demonstrated that cost coefficients cannot be recovered uniquely from the observation of mean-covariance trajectories alone. It suggests, however, that the quantity $\cA = R^{-1} B^\dag A^\dag (B^\dag)^{-1} R$ and the set $\{\tilde Q, \tilde Q_T, \tilde{\bar Q}, \tilde{\bar Q}_T\}$ play a crucial role. We first show that these quantities can be recovered from the observation of trajectories, and subsequently provide a method to obtain consistent values for the cost parameters $\gamma_2$.

To identify the relevant quantities, we require observations starting from sufficiently diverse initializations. To identify $\cA$, we aim to use Equation~\eqref{eq:p_dyn_tilde} by first identifying $\tilde P$. To eliminate the unknown $\tilde Q$, we differentiate the equation. Upon differentiation, $\cA$ is multiplied by $\dot{\tilde P}_t$. If $\dot{\tilde P}_t$ is invertible for at least one $t \in [0, T]$, we can recover $\cA$. However, this invertibility condition is overly restrictive: since $\cA$ is a constant and we have access to the full curvature of $\tilde P$, we may identify $\cA$ by leveraging the entire trajectory of $\tilde P$. Definition~\ref{definition:hatGamma2} specifies the requisite informativeness of the $\tilde P$ trajectory.

\begin{definition}
	\label{definition:hatGamma2}
	Let $\hat \Gamma_2 \subset \Gamma$ be the set of parameters $\gamma$ such that there exist $m \in \NN^*$ and a subdivision $0 \leq t_0 < t_1 < \dots < t_{m-1} \leq T$ for which the matrix $\begin{pmatrix} \dot{\tilde P}_{t_0} & \cdots & \dot{\tilde P}_{t_{m-1}}\end{pmatrix}^\dag $
	is of rank $n$.
\end{definition}

\begin{remark}
	If there exists at least one $t \in [0, T]$ such that $\dot{\tilde P}_t$ is invertible, this condition is satisfied.
\end{remark}

\begin{lemma}
	\label{lemma:(A,B,Sigma)_to_cA}
	Let $k \in \NN^*$ such that $\min(2, n) \leq k$, and let $\cI = \{(\bar x_0^i, \Sigma_0^i)\}_{i=1}^k$. The quantities $\cA, \tilde Q, \tilde Q_T$ are identifiable with respect to the observations $\psi_\gamma(\cI)$ over $\hat\Gamma_1(\cI) \cap \hat\Gamma_2$. Explicitly, 
	$
	\VEC(\cA) = \arg \min_{L\in \RR^{n^2}} \| \xi L - \xi' \|^2,
	$
	where, for a subdivision $t_0 < \dots < t_{m-1}$ as in Definition~\ref{definition:hatGamma2}, $\xi = (\dot{\tilde P}_{t_0} \otimes I_n, \dots, \dot{\tilde P}_{t_{m-1}} \otimes I_n)^\dag$, $\Xi_t = -(\ddot{\tilde P}_t + \dot{\tilde P}_t A - \dot{\tilde P}_t B \tilde P_t - \tilde P_t B \dot{\tilde P}_t)$ and $\xi' = \bigl(\VEC(\Xi_{t_0})^\dag, \dots, \VEC(\Xi_{t_{m-1}})^\dag\bigr)^\dag$. Finally, for any $t \in [0, T]$,
	\begin{equation}
		\label{eq:rec-tildeQ}
		\tilde Q = -(\dot{\tilde P}_t + \tilde P_t A + \cA \tilde P_t - \tilde P_t B \tilde P_t), \quad \tilde Q_T = \tilde P_T.
	\end{equation}
\end{lemma}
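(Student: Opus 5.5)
The plan is to use Lemma~\ref{lemma:tildeP_identifiable} as the first step. Since $\gamma\in\hat\Gamma_1(\cI)$, that lemma recovers $\tilde P_t$ for every $t\in[0,T]$ from $(A,B,\sigma)$ and $\psi_\gamma(\cI)$. The whole curve $t\mapsto \tilde P_t$ is therefore a known function of the data. Its derivatives $\dot{\tilde P}_t$ and $\ddot{\tilde P}_t$ are then also known. Regularity is not an issue: $P$ solves the Riccati equation \eqref{eq:p_dyn} with constant coefficients, so it is $C^\infty$ on $[0,T]$, and so is $\tilde P = R^{-1}B^\dag P$. In particular, $\tilde Q_T=\tilde P_T$ follows at once from the terminal condition $P_T=Q_T$ after multiplying by $R^{-1}B^\dag$.

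The next step is to eliminate the unknown constant $\tilde Q$ from \eqref{eq:p_dyn_tilde}. Differentiating \eqref{eq:p_dyn_tilde} in $t$, and using that $A$, $B$, $\cA$, $\tilde Q$ are constant, gives
\begin{equation*}
\ddot{\tilde P}_t + \dot{\tilde P}_t A + \cA \dot{\tilde P}_t - \dot{\tilde P}_t B\tilde P_t - \tilde P_t B \dot{\tilde P}_t = 0,
\end{equation*}
that is, $\cA\dot{\tilde P}_t = \Xi_t$, with $\Xi_t$ as in the statement. This equation is linear in the unknown $\cA$ and involves only known quantities otherwise. Vectorizing with $\VEC(XYZ)=(Z^\dag\otimes X)\VEC(Y)$ gives $\VEC(\cA\dot{\tilde P}_t)=(\dot{\tilde P}_t^\dag\otimes I_n)\VEC(\cA)$. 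Stacking over the subdivision $t_0<\dots<t_{m-1}$ of Definition~\ref{definition:hatGamma2} yields $\xi\,\VEC(\cA)=\xi'$. Here I would match the block convention of $\xi$ in the statement, reading the blocks as $\dot{\tilde P}_{t_j}^\dag\otimes I_n$ stacked vertically.

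The main point to check is injectivity, i.e. that the stacked system determines $\cA$ uniquely. Suppose $L$ satisfies $L\dot{\tilde P}_{t_j}=0$ for all $j$. Then $L\,\bigl(\dot{\tilde P}_{t_0}\ \cdots\ \dot{\tilde P}_{t_{m-1}}\bigr)=0$. By Definition~\ref{definition:hatGamma2} this $n\times nm$ matrix has rank $n$, since a matrix and its transpose have equal rank, so its columns span $\RR^n$. Hence $L=0$, and $\xi$ has full column rank $n^2$. The least-squares problem therefore has a unique minimizer, namely the exact solution $\VEC(\cA)$, because $\cA$ satisfies the system with zero residual.

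Finally, with $\cA$ and $\tilde P$ known, rearranging \eqref{eq:p_dyn_tilde} at any $t$ gives formula \eqref{eq:rec-tildeQ} for $\tilde Q$. This value does not depend on $t$, because the differentiated identity shows the right-hand side has zero derivative. Each recovered quantity is an explicit function of $\gamma_1$ and $\psi_\gamma(\cI)$, which is exactly identifiability over $\hat\Gamma_1(\cI)\cap\hat\Gamma_2$.
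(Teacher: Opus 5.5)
Your proposal is correct and follows the paper's proof essentially step for step. You recover $\tilde P$ via Lemma~\ref{lemma:tildeP_identifiable}, differentiate \eqref{eq:p_dyn_tilde} to eliminate $\tilde Q$ and get $\cA\dot{\tilde P}_t=\Xi_t$, vectorize and stack over the subdivision, and then read off $\tilde Q$ and $\tilde Q_T=\tilde P_T$. Your explicit check that $\xi$ has full column rank, so that the least-squares minimizer is unique and equals $\VEC(\cA)$, and your regularity remark are details the paper leaves implicit, and both are correct.
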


\begin{arxivonly}
\begin{proof}
	The quantity $\cA$ appears in the derivative of the transformed Riccati equation:
	\begin{equation*}
		\dot{\tilde P}_t + \tilde P_t A + \cA \tilde P_t - \tilde P_t B\tilde P_t + \tilde Q = 0.
	\end{equation*}
	By differentiating this equation with respect to time, we eliminate the unknown $\tilde Q$:
	\begin{equation*}
		\ddot{\tilde P}_t + \dot{\tilde P}_t A + \cA \dot{\tilde P}_t - \dot{\tilde P}_t B\tilde P_t - \tilde P_t B \dot{\tilde P}_t = 0.
	\end{equation*}
	Rearranging terms yields:
	\begin{equation}
		\cA \dot{\tilde P}_t = -(\ddot{\tilde P}_t + \dot{\tilde P}_t A - \dot{\tilde P}_t B \tilde P_t - \tilde P_t B \dot{\tilde P}_t).
	\end{equation}
	Lemma~\ref{lemma:tildeP_identifiable} states that $\tilde P$ is identifiable with respect to the knowledge of $(A, B, \sigma)$ and the observations $\psi_\gamma(\cI)$ over $\hat\Gamma_1(\cI)$. By definition of $\hat \Gamma_2$, we know that there exist $m \in \NN^*$ and a subdivision $0\leq t_0<\cdots < t_{m-1} \leq T$ as in Definition~\ref{definition:hatGamma2}. Assuming access to $(\tilde P_{t_0}, \dots, \tilde P_{t_{m-1}})$, we can formulate the linear system:
	\begin{equation*}
		\begin{pmatrix}
			\dot{\tilde P}_{t_0}^\dag \otimes I_n \\ \vdots \\ \dot{\tilde P}_{t_{m-1}}^\dag \otimes I_n
		\end{pmatrix} \VEC(\cA) =
		\begin{pmatrix}
			\VEC(\Xi_{t_0}) \\ \vdots \\ \VEC(\Xi_{t_{m-1}})
		\end{pmatrix},
	\end{equation*}
	where $\Xi_t = -(\ddot{\tilde P}_t + \dot{\tilde P}_t A - \dot{\tilde P}_t B \tilde P_t - \tilde P_t B \dot{\tilde P}_t)$. Denoting the left-hand matrix by $\xi$ and the right-hand vector by $\xi'$, we obtain:
	\begin{equation*}
		\VEC(\cA) = \arg\min_{L \in \RR^{n^2}} \| \xi L - \xi'\|^2.
	\end{equation*}
	Thus, the identifiability of $\cA$ follows.

	From Equation~\eqref{eq:p_dyn_tilde}, we have for any $t \in [0, T]$:
	$$
		\tilde Q = -(\dot{\tilde P}_t + \tilde P_tA + \cA\tilde  P_t - \tilde P_t B \tilde P_t),
	$$
	with the terminal condition $\tilde Q_T = \tilde P_T$. Thus the identifiability of $\tilde Q, \tilde Q_T$ follows.
\end{proof}
\end{arxivonly}

\begin{remark}
	In the previous lemma:
	\begin{itemize}
		\item we used the property of $\hat \Gamma_1(\cI)$ to identify $\tilde P$, and the property of $\hat \Gamma_2$ to identify $\cA$,
		\item we only used the covariance trajectories and not the mean trajectories,
	\end{itemize}
\end{remark}

It remains to identify $\tilde{\bar Q}$ and $\tilde{\bar Q}_T$. These coefficients govern the dynamics of $\tilde r$. Since $\tilde r \in \RR^n$, its associated equation is $\RR^n$-valued. To uniquely identify the 
matrices $\tilde{\bar Q},\tilde{\bar Q}_T \in \RR^{n\times n}$, we require observations from at least $n$ linearly independent trajectories.

\begin{lemma}
	\label{lemma:tildebarQQ_T_indetifiable}
	Let $\cI = \{(\bar x_0^i, \Sigma_0^i)\}_{i=1}^n$ be such that $(\bar x^1_0, \dots, \bar x^n_0)$ forms a basis of $\RR^n$.
	The quantities $\tilde{\bar Q}$ and $\tilde{\bar Q}_T$ are identifiable with respect to the knowledge $(A, B, \sigma)$ and the observations $\psi_\gamma(\cI)$ over $\hat\Gamma_1(\cI)\cap\hat\Gamma_2$. Explicitly, with $\tilde r^i_t = B^{-1}[(A - B\tilde P_t)\bar x^i_t - \dot{\bar x}^i_t]$ and $\bar \fX_t = (\bar x^1_t \vert \cdots \vert \bar x^n_t)$, $\tilde \fr_t = (\tilde r^1_t \vert \cdots \vert \tilde r^n_t)$,
	\begin{equation}
		\label{eq:rec-tildebarQ}
		\tilde{\bar Q} = -2\bigl(\dot{\tilde \fr}_0 + (\cA - \tilde P_0 B) \tilde \fr_0\bigr) \bar \fX_0^{-1},
		\quad
		\tilde{\bar Q}_T = 2\tilde \fr_T \bar \fX_T^{-1}.
	\end{equation}
\end{lemma}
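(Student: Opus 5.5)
Since $\cI$ contains $n\ge\min(2,n)$ initializations, the plan is to take $\tilde P$ and $\cA$ as already identified over $\hat\Gamma_1(\cI)\cap\hat\Gamma_2$. Lemma~\ref{lemma:tildeP_identifiable} gives $\tilde P_t$ for all $t$, and Lemma~\ref{lemma:(A,B,\Sigma)_to_cA} gives $\cA$. Theorem~\ref{theorem:locally-identifiable} then recovers, for each $i$, the whole trajectory $\tilde r^i_t = B^{-1}[(A-B\tilde P_t)\bar x^i_t - \dot{\bar x}^i_t]$ on $[0,T]$. The right-hand side uses only $(A,B)$, the identified $\tilde P$, and the observed $\bar x^i$. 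Because $\bar x^i \in C^1$ and we observe it as a function of time, $\tilde r^i$ is known as a function of time. Moreover, $\tilde r^i$ solves the linear ODE \eqref{eq:r_dyn_tilde}, so it is $C^1$, and its derivative $\dot{\tilde r}^i$, in particular $\dot{\tilde r}^i_0$, is a functional of the observations.

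Next, I would stack the $n$ trajectories column-wise into $\bar\fX_t$ and $\tilde\fr_t$. Equation \eqref{eq:r_dyn_tilde} holds for each column with the same coefficient matrices, so
\[
\dot{\tilde\fr}_t + (\cA - \tilde P_t B)\tilde\fr_t + \tfrac12 \tilde{\bar Q}\,\bar\fX_t = 0 ,
\qquad
\tilde\fr_T = \tfrac12 \tilde{\bar Q}_T \bar\fX_T .
\]
Evaluating the first identity at $t=0$ and the second at $T$ gives two linear matrix equations in the unknowns $\tilde{\bar Q}$ and $\tilde{\bar Q}_T$.

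To solve these equations we need $\bar\fX_0$ and $\bar\fX_T$ to be invertible. $\bar\fX_0$ is invertible by the basis hypothesis. $\bar\fX_T$ is invertible by Lemma~\ref{lemma:barx0_mapsto_barxT}(2), since the terminal means of a basis of initial means again form a basis, for any choice of $\Sigma^i_0$. Right-multiplying by the inverses then yields exactly \eqref{eq:rec-tildebarQ}. Every quantity on the right is a function of $(A,B,\sigma)$ and $\psi_\gamma(\cI)$, which proves identifiability.

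I expect the only delicate point to be the invertibility of $\bar\fX_T$, and this is settled by Lemma~\ref{lemma:barx0_mapsto_barxT}. A minor secondary check is that time $t=0$ may be used even though $\hat\Gamma_2$ is needed only for $\cA$. Any $t$ with $\bar\fX_t$ invertible would serve equally well, and $t=0$ is simply the choice where invertibility is given by hypothesis.
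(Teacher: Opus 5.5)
Your proposal is correct and follows the paper's proof essentially step for step. Like the paper, you recover $\tilde r^i$ from \eqref{eq:barx_dyn_tilde} using the identified $\tilde P$, stack the $n$ copies of \eqref{eq:r_dyn_tilde} and its terminal condition, and invert $\bar\fX_0$ (by the basis hypothesis) and $\bar\fX_T$ (by Lemma~\ref{lemma:barx0_mapsto_barxT}); your explicit checks that $k=n\ge\min(2,n)$ and that $\tilde r^i$ is $C^1$ are details the paper leaves implicit.
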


\begin{arxivonly}
\begin{proof}
	Let $t\in [0, T]$, we know that $\tilde P_t$ is identifiable. From \eqref{eq:barx_dyn_tilde}, we recover $\tilde r^i_t$ for each $i \in \{1, \dots, n\}$ as:
	$$
		\tilde r^i_t = B^{-1} [ (A - B \tilde P_t) \bar x^i_t - \dot{\bar x}^i_t ].
	$$
	Let $\bar \fX_t = (\bar x^1_t \vert \cdots \vert \bar x^n_t)$ and $\tilde \fr_t = (\tilde r^1_t \vert \cdots \vert \tilde r^n_t)$. From Equation~\eqref{eq:r_dyn_tilde}, the parameter $\tilde{\bar Q}$ must satisfy:
	$$
		\tilde{\bar Q} \bar x^i_t = -2(\dot{\tilde r}^i_t + (\cA - \tilde P_t B) \tilde r^i_t ).
	$$
	Aggregating these over the $n$ trajectories, we obtain:
	$$
		\tilde{\bar Q} \bar \fX_t = -2(\dot{\tilde \fr}_t + (\cA - \tilde P_t B)\tilde \fr_t),
	$$
	By assumption, $\bar \fX_0$ is invertible. We have:
	$$
		\tilde{\bar Q} = -2(\dot{\tilde \fr}_0 + (\cA - \tilde P_0 B) \tilde{\fr}_0 ) \bar \fX^{-1}_0.
	$$
	According to Lemma~\ref{lemma:barx0_mapsto_barxT}, $\bar \fX_T$ is invertible.
	Finally, the terminal condition provides:
	$
		\tilde{\bar Q}_T = 2\tilde\fr_T \bar \fX_T^{-1}.
	$
\end{proof}
\end{arxivonly}
We have thus established how to identify $\cA, \tilde Q, \tilde Q_T, \tilde{\bar Q}, \tilde{\bar Q}_T$. The remaining challenge is whether it is possible to find consistent cost parameters $(R, Q, Q_T, \bar Q, \bar Q_T) \in \Gamma_2$.

\begin{theorem}
	\label{theorem:sdp}
	Let $\gamma = (\gamma_1, \gamma_2) \in \Gamma$. Let
	\(
	K(\mathcal A) = \mathcal A^\dag \otimes I_n - I_n \otimes B^\dag A^\dag (B^\dag)^{-1} ,
	\)
	and
	\(
	K(\tilde \cQ) = (\tilde \cQ B)^\dag\otimes I_n - I_n \otimes (\tilde \cQ B)^\dag
	\)
	for \(\tilde \cQ \in \{\tilde Q, \tilde Q_T, \tilde{\bar Q}, \tilde{\bar Q}_T\}\). Let \(K = I_{n^2} - \cT_n \in \mathbb{R}^{n^2 \times n^2}\). Define
	$$
		\mathcal K = \bigl(K^\dag,\, K(\mathcal A)^\dag,\, K(\tilde Q)^\dag,\, K(\tilde Q_T)^\dag,\, K(\tilde{\bar Q})^\dag,\, K(\tilde{\bar Q}_T)^\dag\bigr)^{\dag},
		\quad
	$$
	with $d = \dim(\ker(\mathcal K))$. Let \((G_1,\dots,G_d)\) be a basis of $\ker(\cK)$.
	Then, for \(\epsilon>0\), the problem
	\[
		\min_{\alpha \in \mathbb{R}^d} \|\alpha\|^2
		\quad \text{s.t.} \quad
		\begin{cases}
			(B^\dag)^{-1} R' \tilde \cQ \succeq 0, \; \tilde \cQ \in \{\tilde Q, \tilde Q_T, \tilde{\bar Q}, \tilde{\bar Q}_T \}, \\
			R' \succeq \epsilon I,                                                                                                \\
			\VEC(R') = \sum_{i=1}^d \alpha_i G_i
		\end{cases}
	\]
	has a unique solution. Moreover,
	$
		(R', (B^\dag)^{-1} R' \tilde Q, (B^\dag)^{-1} R' \tilde Q_T, (B^\dag)^{-1} R' \tilde{\bar Q}, (B^\dag)^{-1} R' \tilde{\bar Q}_T) \in \Gamma_2
	$
	is consistent with $\gamma_2$ with respect to $\gamma_1$.
\end{theorem}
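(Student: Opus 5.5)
The proof has three steps: identify the kernel of $\mathcal K$ with the set of matrices $R'$ satisfying the hypotheses of Theorem~\ref{theorem:tildeQ=tildeQ'_cA=cA'_psi=psi'}, show the program is feasible using the true $R$, then get uniqueness from strict convexity.

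\emph{Step 1: the kernel of $\mathcal K$.} I will show that $\ker(\mathcal K)$ is exactly the set of $\VEC(R')$ for which the following three things hold:
\begin{itemize}
  \item $R'$ is symmetric;
  \item $R'$ satisfies \eqref{eq:cA=cA'} with $\cA$ fixed to the true value;
  \item every candidate $\eta' := (B^\dag)^{-1}R'\tilde\cQ$ is symmetric.
\end{itemize}
The computations use the rule $\VEC(XYZ)=(Z^\dag\otimes X)\VEC(Y)$.
\begin{itemize}
  \item $K\VEC(R')=\VEC(R'-R'^\dag)$, so $K\VEC(R')=0$ is symmetry.
  \item Condition \eqref{eq:cA=cA'} for $R'$ reads $R'\cA = B^\dag A^\dag (B^\dag)^{-1} R'$. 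Vectorizing gives $(\cA^\dag\otimes I_n)\VEC(R') - (I_n\otimes B^\dag A^\dag(B^\dag)^{-1})\VEC(R')=0$, i.e.\ $K(\cA)\VEC(R')=0$.
  \item Write $\eta' = (B^\dag)^{-1}R'\tilde\cQ$. Since $B^\dag$ is invertible, $\eta'$ is symmetric iff $B^\dag\eta' B = (B^\dag \eta' B)^\dag$. Using $R'^\dag=R'$ (already imposed by $K$), this is $R'\tilde\cQ B = (\tilde\cQ B)^\dag R'$. Vectorizing gives exactly $K(\tilde\cQ)\VEC(R')=0$. I will check the index/transpose conventions carefully; this bookkeeping is routine.
\end{itemize}

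\emph{Step 2: the true $R$ is feasible.} The true $R$ lies in $\ker(\mathcal K)$, because $R$ is symmetric, $\cA$ is defined from $R$, and $(B^\dag)^{-1}R\tilde\cQ=\cQ$ is symmetric positive semidefinite for each of the four cost matrices. For the strict bound, $R\in\bbS^n_{++}$ gives $R\succeq \lambda_{\min}(R) I$. Since every constraint is homogeneous of degree one in $R'$, the rescaling $cR$ with $c=\epsilon/\lambda_{\min}(R)$ (or $c\ge1$, whichever is needed) satisfies $cR\succeq\epsilon I$ while keeping all the PSD constraints. Hence the feasible set is nonempty.

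\emph{Step 3: uniqueness.} The constraint $\VEC(R')=\sum_i\alpha_iG_i$ is affine in $\alpha$, and the PSD constraints are linear matrix inequalities in $\alpha$. The feasible set is therefore a closed convex subset of $\RR^d$, and it is nonempty by Step 2. Minimizing the strictly convex, coercive function $\|\alpha\|^2$ over such a set has exactly one minimizer: existence by coercivity and closedness, uniqueness by strict convexity. Since the $G_i$ form a basis, $\alpha$ determines $R'$ uniquely.

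\emph{Conclusion: consistency.} Take the optimal $R'$. It is symmetric with $R'\succeq\epsilon I$, so $R'\in\bbS^n_{++}$. Each $\eta'=(B^\dag)^{-1}R'\tilde\cQ$ is symmetric by Step 1 and positive semidefinite by the constraint, so the tuple lies in $\Gamma_2$. By construction $R'^{-1}B^\dag\eta'=\tilde\cQ$, so \eqref{eq:tildeQ=tildeQ'} holds, and \eqref{eq:cA=cA'} holds by Step 1. Theorem~\ref{theorem:tildeQ=tildeQ'_cA=cA'_psi=psi'} then gives consistency.

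The main obstacle is Step 1. The PSD constraint ``$(B^\dag)^{-1}R'\tilde\cQ\succeq 0$'' only makes sense once this matrix is symmetric, and that symmetry has to come precisely from the $K(\tilde\cQ)$ blocks. I will therefore verify the Kronecker identities carefully, in particular that $K(\tilde\cQ)\VEC(R')=\VEC\bigl(R'\tilde\cQ B-(\tilde\cQ B)^\dag R'\bigr)$ holds under the paper's conventions.
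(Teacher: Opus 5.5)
Your proposal is correct and follows the paper's proof essentially step by step: show $\VEC(R)\in\ker(\mathcal K)$ and rescale $R$ to get feasibility, use strict convexity and coercivity of $\|\alpha\|^2$ on the nonempty closed convex feasible set, and invoke Theorem~\ref{theorem:tildeQ=tildeQ'_cA=cA'_psi=psi'} for consistency. Your explicit treatment of the converse direction is a more careful version of the paper's ``by the same reasoning as above'': kernel membership forces $R'$ and each $(B^\dag)^{-1}R'\tilde\cQ$ to be symmetric, and it yields \eqref{eq:cA=cA'} once $R'\succeq\epsilon I$ makes $R'$ invertible.
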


\begin{proof}
	Let us first show that the kernel of $\cK$ has dimension greater than or equal to $1$.

	By definition of the $\Gamma_2$, $R \in \bbS^n_{++}$, therefore $K\VEC(R) = 0$. Notice that $K(\cA)\VEC(R) = 0$ if and only if $B^\dag A^\dag (B^\dag)^{-1} R = R\cA$. Yet, by definition, $\cA = R^{-1}B^\dag A^\dag (B^\dag)^{-1} R.$
	Therefore $K(\cA)\VEC(R) = 0$.

	Let $\tilde \cQ \in \{\tilde Q, \tilde Q_T,\tilde{\bar Q}, \tilde{\bar Q}_T\}$. We have $K(\tilde \cQ)\VEC(R) = 0$
	if and only if
	\[
		R\tilde \cQ B = B^\dag \tilde \cQ^\dag R
		\iff (B^\dag)^{-1}R\tilde \cQ = \tilde \cQ^\dag R B^{-1}.
	\]
	This is satisfied since $Q = (B^\dag)^{-1}R\tilde Q$
	is symmetric (the same holds for $Q_T, \bar Q, \bar Q_T$). Hence $\VEC(R) \in \ker(\cK)$. Now, it is easy to verify that there exists a $\lambda \in \RR_+$ such that $\lambda R$ satisfies the constraints of the semidefinite program (SDP).

	Now let us prove that a solution to the semidefinite problem yields consistent cost parameters. Let $\epsilon>0$, and let $\alpha \in \mathbb{R}^d$ be a solution of the problem. Define $R'$ as in the problem and set $\cQ' = (B^\dag)^{-1} R' \tilde \cQ$ for $\cQ \in \{Q, Q_T, \bar Q, \bar Q_T\}$.
	By the same reasoning as above, it follows that $(R', Q', Q'_T, \bar Q', \bar Q'_T)$ satisfies \eqref{eq:tildeQ=tildeQ'} and \eqref{eq:cA=cA'}. Furthermore, the constraints of the problem impose $R' \in \bbS^n_{++}$, $Q', Q'_T, \bar Q', \bar Q'_T  \in \bbS^n_+$.

	Theorem~\ref{theorem:tildeQ=tildeQ'_cA=cA'_psi=psi'} then implies that $(R', Q', Q'_T, \bar Q', \bar Q'_T)$ is consistent with $\gamma_2$ with respect to $\gamma_1$.

	Finally, we show the existence and uniqueness of the solution to the semidefinite problem. The constraints define a non-empty closed convex set, and the objective function $\|\cdot\|^2$ is strictly convex. Since the objective is coercive, it attains a minimum; strict convexity yields uniqueness.
\end{proof}

\begin{corollary}
	Let $\cI = \{(\bar x_0^i, \Sigma_0^i)\}_{i=1}^n$ be such that $(\bar x^1_0, \dots, \bar x^n_0)$ forms a basis of $\RR^n$. The optimal control is globally identifiable over $\hat \Gamma_1(\cI)\cap \hat \Gamma_2$ with respect to the observations $\psi_\gamma(\cI)$.
\end{corollary}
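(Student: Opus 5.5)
The plan is to build an explicit map $F(\gamma_1, \psi_\gamma(\cI))$ that returns $\varphi_\gamma(\bar x_0, \Sigma_0)$ for every initialization $(\bar x_0, \Sigma_0) \in \RR^n \times \bbS^n_{++}$. The construction chains the identification results already proved with the consistency result of Theorem~\ref{theorem:sdp}.

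First, fix $\gamma \in \hat\Gamma_1(\cI)\cap\hat\Gamma_2$. Since $n \geq \min(2,n)$, the set $\cI$ has enough elements for Lemma~\ref{lemma:tildeP_identifiable}, so $\tilde P_t$ is recovered for every $t$ from $(A,B,\sigma)$ and $\psi_\gamma(\cI)$. Lemma~\ref{lemma:(A,B,Sigma)_to_cA} then gives $\cA$, $\tilde Q$ and $\tilde Q_T$. Because $(\bar x_0^i)$ is a basis, Lemma~\ref{lemma:tildebarQQ_T_indetifiable} gives $\tilde{\bar Q}$ and $\tilde{\bar Q}_T$. All of these are functions of $\gamma_1$ and the observations only, and they are the true quantities of $\gamma$.

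Second, feed these quantities into the semidefinite program of Theorem~\ref{theorem:sdp}, with some fixed $\epsilon>0$. One subtlety needs care here. The matrix $\cK$ depends only on $B$, $A$ and the identified tilde quantities, so its kernel and the program itself are computable from the data. However, the kernel basis $(G_i)$ is not canonical. To make $F$ a genuine function, fix a deterministic choice of basis, for instance an orthonormal basis obtained by a fixed procedure, or simply note that the unique minimizer $R'$ depends only on the program data. By Theorem~\ref{theorem:sdp}, the program has a unique solution, and the resulting $\gamma_2' = (R', (B^\dag)^{-1}R'\tilde Q, \dots)$ lies in $\Gamma_2$ and is consistent with $\gamma_2$ with respect to $\gamma_1$. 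Set $F(\gamma_1,\psi_\gamma(\cI)) := \varphi_{(\gamma_1,\gamma_2')}$, evaluated at the new initialization.

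Third, show that $\varphi_{(\gamma_1,\gamma_2')} = \varphi_\gamma$ everywhere. Consistency alone only gives equal trajectories, so use the stronger structure. By construction, $\gamma_2'$ satisfies \eqref{eq:tildeQ=tildeQ'} and \eqref{eq:cA=cA'}. The proof of Theorem~\ref{theorem:tildeQ=tildeQ'_cA=cA'_psi=psi'} then yields $\tilde P = \tilde P'$ and, for every $\bar x_0$, $\tilde r = \tilde r'$, by uniqueness for the boundary value problem \eqref{eq:barx-tilder}. Since $\varphi_\gamma(\bar x_0,\Sigma_0) = (\tilde r, \tilde P)$, and since $\tilde P$ and $\tilde r$ do not depend on $\Sigma_0$, this gives equality for every initialization. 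Alternatively, once trajectories agree, $\tilde r$ can be recovered from $\bar x$ via \eqref{eq:barx_dyn_tilde}, and $\tilde P$ via the same covariance argument used in the first step.

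The main obstacle I expect is the well-definedness of $F$, namely that the constructed $\gamma_2'$ depends only on the observations and not on hidden components of $\gamma$. This rests on the fact that every ingredient of the program (the matrices $B$ and $A$, the identified $\cA$ and the identified tilde quantities) is itself identified. The non-uniqueness of the kernel basis, and of the subdivision used in Lemma~\ref{lemma:(A,B,Sigma)_to_cA}, must be settled by fixing a canonical choice. In both cases the identified values themselves are unique, so the choice does not matter.
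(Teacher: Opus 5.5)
Your proposal is correct and follows the paper's argument: identify $\cA,\tilde Q,\tilde Q_T,\tilde{\bar Q},\tilde{\bar Q}_T$ via Lemmas~\ref{lemma:(A,B,Sigma)_to_cA} and~\ref{lemma:tildebarQQ_T_indetifiable}, build $\gamma_2'$ from the semidefinite program of Theorem~\ref{theorem:sdp}, and reuse the proof of Theorem~\ref{theorem:tildeQ=tildeQ'_cA=cA'_psi=psi'} to get $\tilde P=\tilde P'$ and $\tilde r=\tilde r'$ for every $\bar x_0$, hence $\varphi_\gamma=\varphi_{(\gamma_1,\gamma_2')}$. Your extra care about fixing a canonical kernel basis so that $F$ is a genuine function is a refinement the paper leaves implicit; strictly, $R'$ itself may depend on that choice, but the output $\varphi_{(\gamma_1,\gamma_2')}$ does not.
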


\begin{arxivonly}
\begin{proof}
	Lemmas~\ref{lemma:(A,B,Sigma)_to_cA} and \ref{lemma:tildebarQQ_T_indetifiable} show that $(\cA, \tilde Q, \tilde Q_T, \tilde{\bar Q}, \tilde{\bar Q}_T)$ are identifiable. Theorem~\ref{theorem:sdp} states that we can recover consistent cost parameters $\gamma_2'$ such that $\gamma_2'$ verifies \eqref{eq:tildeQ=tildeQ'} and \eqref{eq:cA=cA'}. In the proof of Theorem~\ref{theorem:tildeQ=tildeQ'_cA=cA'_psi=psi'}, we established that $\varphi_\gamma = \varphi_{(\gamma_1, \gamma_2')}$, which finishes the proof.
\end{proof}
\end{arxivonly}

\begin{arxivonly}
\begin{table}[ht]
	\centering
	\renewcommand{\arraystretch}{1.3} 
	\caption{Identifiability of Cost Parameters. 
    }
	\label{tab:recovery_summary}
	\begin{tabular}{|l|l|c|l|}
		\hline
		\textbf{Known} & \textbf{Observed}       & \textbf{Assumption}                                         & \textbf{Result}                 \\ \hline
		$A, B, \sigma$ & $\psi_\gamma(\cI)$ & $\hat \Gamma_1(\cI)$                                        & \ref{theory-q1}                 \\ \hline
		$A, B, \sigma$ & $\psi_\gamma(\cI)$      & $\hat \Gamma_1(\cI)\cap \hat \Gamma_2$, $\det(\bar \fX_0)\ne0$ & \ref{theory-q2}-\ref{theory-q3} \\ \hline
	\end{tabular}
\end{table}
\end{arxivonly}

\begin{arxivonly}
\section{Numerical experiments}
\label{section:numerical}

In this section, we numerically validate the proposed parameter recovery framework. We consider a two-dimensional state space ($n=2$) with two distinct initial configurations: $\bar{x}_0^1 = (1,0)^\dag$, $\bar{x}_0^2 = (0,1)^\dag$, and initial covariances $\Sigma_0^1 = I_2$ and $\Sigma_0^2 = \begin{pmatrix} 1 & 1/2 \\ 1/2 & 1 \end{pmatrix}$. The control and noise matrices are fixed as $B = I_2$ and $\sigma = 0.2 I_2$.

To evaluate the robustness of the solver, we performed $100$ independent trials. In each trial, the ground-truth parameters $\left( A, R, Q, Q_T, \bar{Q}, \bar{Q}_T \right)$ were randomly sampled from a distribution of symmetric positive definite matrices. The forward equilibrium trajectories $\psi_{\gamma}(\bar x_0, \Sigma_0)$ were generated by solving the coupled system \eqref{eq:p_dyn}-\eqref{eq:r_dyn} using a combination of implicit schemes and Newton's method.

We subsequently applied the inverse mapping to recover the intermediate quantities $\cA_{\text{rec.}}$ and $\tilde \cQ_{\text{rec.}}$ for $\cQ \in \{Q, Q_T, \bar Q, \bar Q_T\}$, providing the necessary inputs for the SDP defined in Theorem~\ref{theorem:sdp} (with $\epsilon = 1$). Due to numerical sensitivities associated with the second-order derivative $\ddot{\tilde P}_t$, we opted to recover $\cA$ and $\tilde{Q}$ by performing a Mean Squared Error (MSE) minimization over the full trajectories using \eqref{eq:p_dyn_tilde} and \eqref{eq:barx_dyn_tilde} as constraints, rather than using the direct algebraic recovery.

The statistical distribution of the relative errors, $\|\cdot_{\text{rec}} - \cdot_{\text{obs}}\|_2 / \|\cdot_{\text{obs}}\|_2$, is reported in Figure~\ref{fig:error_boxplot}. The method achieves high precision in reconstructing the mean-covariance trajectories, with median relative errors typically below $10^{-2}$. Notably, the error in the recovered $R$ matrix remains relatively large. This confirms that the recovered parameters, although not identical to the ground truth, produce an equilibrium nearly indistinguishable from the original. The observed sensitivity in the algebraic recovery of $\cA$ is primarily attributed to the inherent numerical instability of matrix inversion and the noise amplification caused by finite-difference approximations of $\dot \Sigma$.

This is illustrated in Figure~\ref{fig:rec}, which superimposes the original mean-covariance trajectory, generated by the ground-truth parameters, and the recovered one, generated by the parameters returned by our method. The two trajectories are indistinguishable, confirming that the recovered parameters reproduce the observed equilibrium even though they differ from the ground truth.

\begin{figure}[htbp]
	\centering
	\includegraphics[width=0.9\linewidth]{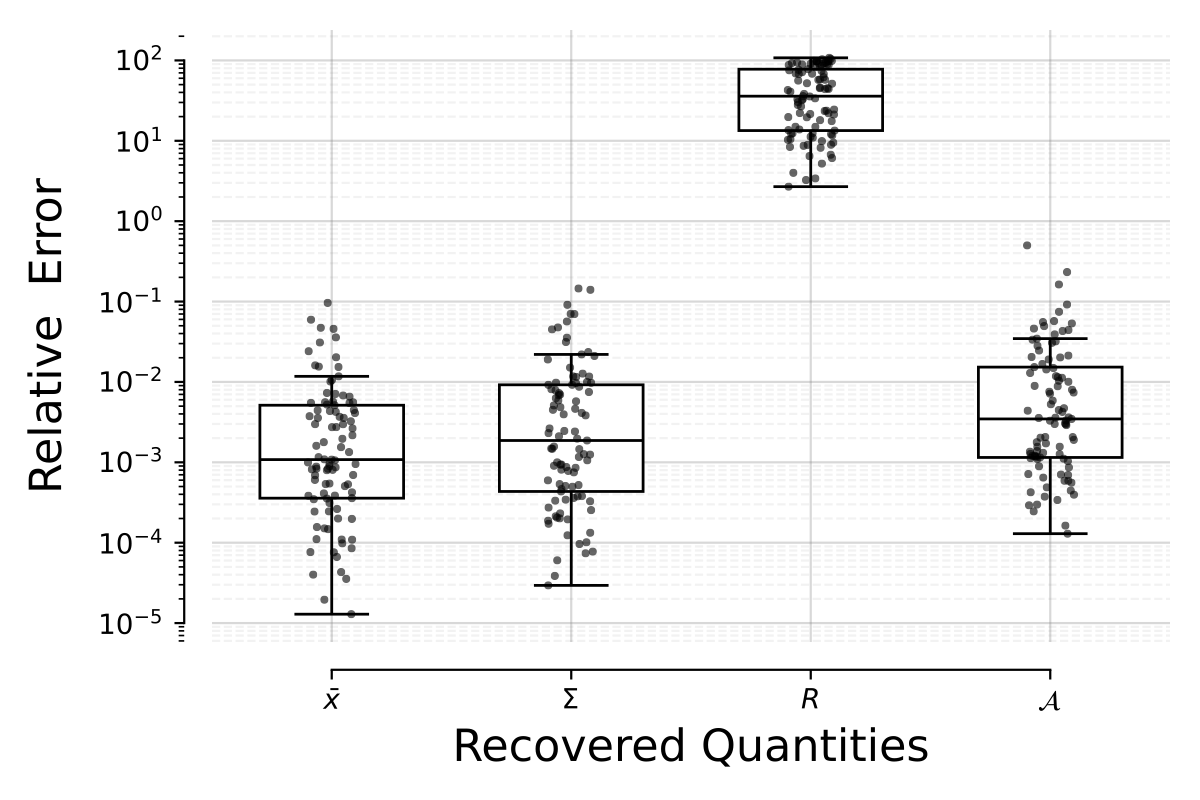}
	\caption{Numerical consistency.}
	\label{fig:error_boxplot}
\end{figure}
\vspace{-2em}
\begin{figure}[htbp]
	\centering
	\includegraphics[width=0.9\linewidth]{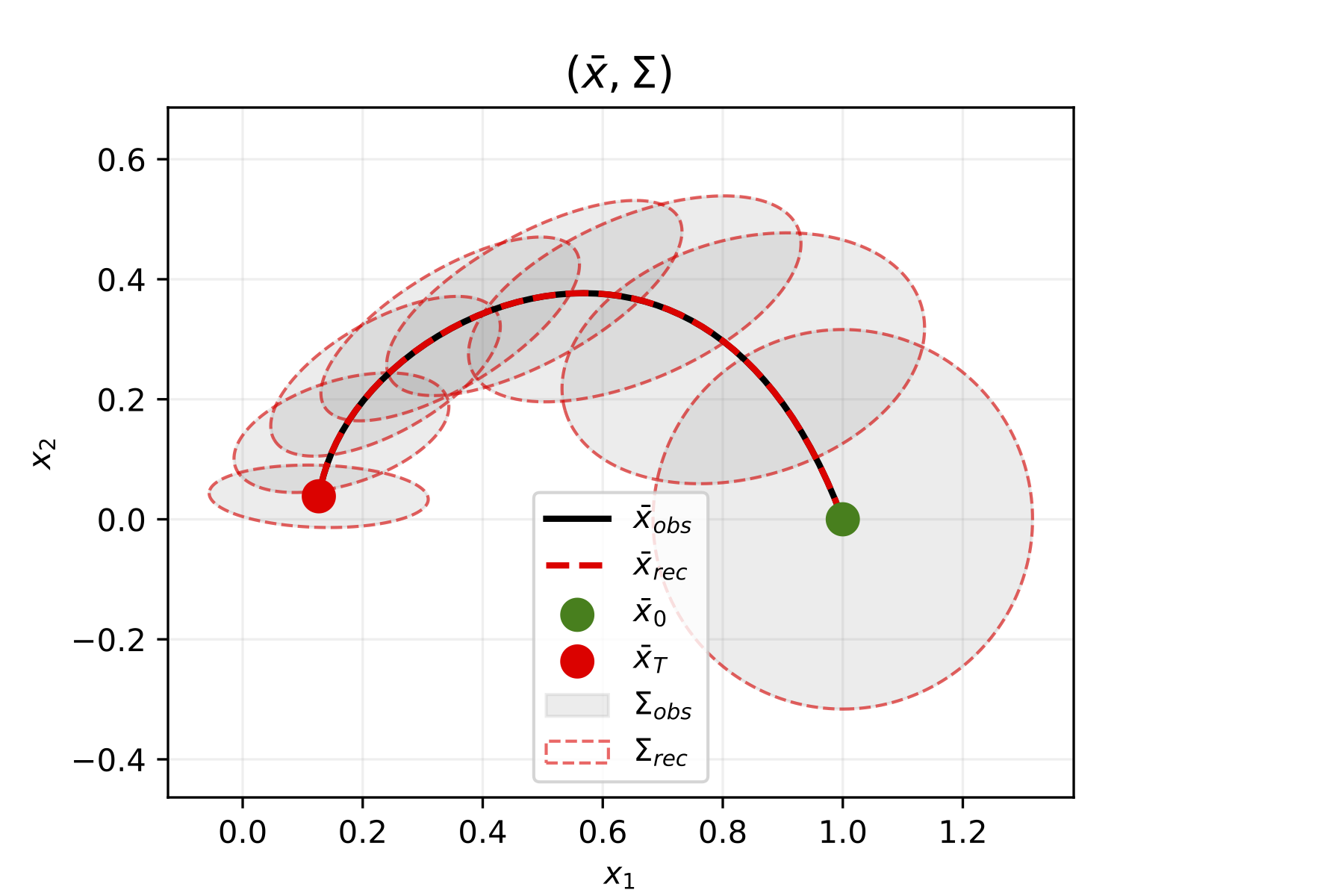}
	\caption{Trajectory comparison: observed vs recovered.}
	\label{fig:rec}
\end{figure}
\end{arxivonly}

\section{Conclusion}
\label{section:conclusion}

\cdc{We established the existence and uniqueness of the Nash equilibrium for a Linear-Quadratic-Gaussian (LQG) Mean Field Game (MFG) framework, and showed that the mapping from admissible parameters to equilibrium mean-covariance trajectories is not injective. We then proved that the optimal control is locally identifiable from sufficiently diverse observed initializations, that generalizing it to unobserved initializations requires recovering consistent cost parameters, and that such parameters follow from the identifiable quantities $\cA, \tilde Q, \tilde Q_T, \tilde{\bar Q}, \tilde{\bar Q}_T$ together with a semidefinite program. A direction for future work is to extend the proposed framework to noisy and partially observed data.}
\arxiv{In this paper, we established the existence and uniqueness of the Nash equilibrium for the Linear-Quadratic-Gaussian (LQG) Mean Field Game (MFG) framework. We demonstrated that the mapping from admissible parameters to equilibrium mean-covariance trajectories is not injective, implying that distinct parameter sets can lead to identical equilibria. Our analysis addressed three key objectives:}

\arxiv{First, we showed that by observing mean-covariance trajectories from sufficiently diverse initializations, it is possible to identify the optimal control for those specific initializations. Second, we demonstrated that generalizing this control to unobserved initializations is not straightforward. We showed that this generalization is achievable by addressing our third objective: the recovery of consistent cost parameters. We proved that these parameters can be identified by first extracting crucial identifiable quantities and subsequently solving a semidefinite program (SDP).}

\arxiv{Our numerical experiments support the theoretical findings. In particular, they show that the proposed method accurately reconstructs mean–covariance trajectories and associated controls, even though the recovered cost parameters may differ significantly from the ground truth. This confirms that consistency with observed equilibria, rather than exact parameter recovery, is the appropriate objective in this setting. Simultaneously, the experiments reveal important numerical challenges, notably the sensitivity of certain identification steps to derivative estimation and matrix inversion.}

\arxiv{These observations point to a clear direction for future work. A key challenge is to extend the proposed framework to noisy and partially observed data, where trajectories are corrupted by estimation errors or sampling effects. It is crucial to address this setting, as it reflects practical real-world scenarios.}



\bibliographystyle{IEEEtran} 
\bibliography{references}

@book{AbouKandilFreilingIonescuJank2003,
  author    = {Abou-Kandil, H. and Freiling, G. and Ionescu, V. and Jank, G.},
  title     = {Matrix Riccati Equations in Control and Systems Theory},
  series = {Systems \& Control: Foundations \& Applications},
  publisher = {Birkh{\"a}user},
  year      = {2003}
}

@book{BensoussanFrehseYam2013,
  author    = {Bensoussan, A. and Frehse, J. and Yam, P.},
  title     = {Mean Field Games and Mean Field Type Control Theory},
  publisher = {Springer},
  year      = {2013}
}

@article{BensoussanSungYamYung2016,
  author  = {Bensoussan, A. and Sung, K. C. J. and Yam, S. C. P. and Yung, S. P.},
  title   = {Linear-Quadratic Mean Field Games},
  journal = {Journal of Optimization Theory and Applications},
  volume  = {169},
  pages   = {496--529},
  year    = {2016}
}

@book{CarmonaDelarue2018,
  author    = {Carmona, R. and Delarue, F.},
  title     = {Probabilistic Theory of Mean Field Games with Applications I: Mean Field FBSDEs, Control, and Games},
  publisher = {Springer},
  year      = {2018}
}

@inproceedings{ChenZiebartAISTATS2015,
  author    = {Chen, X. and Ziebart, B.},
  title     = {Predictive inverse optimal control for linear-quadratic-{G}aussian systems},
  booktitle = {Proceedings of the 18th International Conference on Artificial Intelligence and Statistics (AISTATS)},
  series    = {PMLR},
  volume    = {38},
  pages     = {165--173},
  year      = {2015}
}

@article{bardi2012explicit,
  title={Explicit solutions of some linear-quadratic mean field games},
  author={Bardi, Martino},
  journal={Networks and Heterogeneous Media},
  volume={7},
  number={2},
  pages={243--261},
  year={2012}
}

@article{Graber2016,
  author  = {Graber, P. J.},
  title   = {Linear Quadratic Mean Field Type Control and Mean Field Games with Common Noise, with Application to Production of an Exhaustible Resource},
  journal = {Applied Mathematics \& Optimization},
  volume  = {74},
  number  = {3},
  pages   = {459--486},
  year    = {2016}
}

@inproceedings{JeanMaslovskayaCDC2018,
  author    = {Jean, F. and Maslovskaya, S.},
  title     = {Inverse optimal control problem: the linear-quadratic case},
  booktitle = {Proceedings of the 57th IEEE Conference on Decision and Control (CDC)},
  year      = {2018},
  pages     = {888--893},
  publisher = {IEEE},
  doi       = {10.1109/CDC.2018.8619204}
}

@article{LasryLions2007,
  author    = {Lasry, Jean-Michel and Lions, Pierre-Louis},
  title     = {{Mean field games}},
  journal   = {Japanese Journal of Mathematics},
  volume    = {2},
  number    = {1},
  pages     = {229--260},
  year      = {2007},
  publisher = {Springer}
}

@article{HuangMalhameCaines2006,
  author    = {Huang, Minyi and Malham{\'e}, Roland P. and Caines, Peter E.},
  title     = {Large population stochastic dynamic games: closed-loop {M}c{K}ean-{V}lasov systems and the {N}ash certainty equivalence principle},
  journal   = {Communications in Information and Systems},
  volume    = {6},
  number    = {3},
  pages     = {221--252},
  year      = {2006},
  publisher = {International Press of Boston}
}

@book{AchdouCardaliaguet2020,
  author    = {Achdou, Yves and Cardaliaguet, Pierre and Delarue, Fran{\c{c}}ois and Porretta, Alessio and Santambrogio, Filippo},
  title     = {Mean Field Games: {C}etraro, {I}taly 2019},
  series    = {Lecture Notes in Mathematics},
  volume    = {2281},
  publisher = {Springer International Publishing},
  address   = {Cham},
  year      = {2020}
}

@article{AchdouCapuzzoDolcetta2010,
  author    = {Achdou, Yves and Capuzzo-Dolcetta, Italo},
  title     = {Mean Field Games: Numerical Methods},
  journal   = {SIAM Journal on Numerical Analysis},
  volume    = {48},
  number    = {3},
  pages     = {1136--1162},
  year      = {2010},
  publisher = {Society for Industrial and Applied Mathematics}
}

@article{CarmonaLauriere2021,
  author    = {Carmona, Ren{\'e} and Lauri{\`e}re, Mathieu},
  title     = {Convergence Analysis of Machine Learning Algorithms for the Numerical Solution of Mean Field Control and Games {I}: {T}he Ergodic Case},
  journal   = {SIAM Journal on Numerical Analysis},
  volume    = {59},
  number    = {3},
  pages     = {1455--1485},
  year      = {2021},
  publisher = {Society for Industrial and Applied Mathematics}
}

@article{LiuLoZhang2025,
  author        = {Liu, Hongyu and Lo, Catharine W. K. and Zhang, Shen},
  title         = {Inverse Problems for Mean Field Games},
  year          = {2025},
  journal        = {arXiv 2503.14914}
}

@article{liu2023inverse,
  title={Inverse problems for mean field games},
  author={Liu, Hongyu and Mou, Chenchen and Zhang, Shen},
  journal={Inverse Problems},
  volume={39},
  number={8},
  pages={085003},
  year={2023},
  publisher={IOP Publishing}
}

@article{ZhangYangMouZhou2025,
  author    = {Zhang, Jingguo and Yang, Xianjin and Mou, Chenchen and Zhou, Chao},
  title     = {Learning surrogate potential mean field games via {G}aussian processes: {A} data-driven approach to ill-posed inverse problems},
  journal   = {Journal of Computational Physics},
  volume    = {543},
  pages     = {114412},
  year      = {2025},
  publisher = {Elsevier}
}

@phdthesis{Corrigan1973,
  author = {Corrigan, John D.},
  title  = {Parameter identification applied to linear quadratic differential games},
  year   = {1973},
  type   = {Doctoral Dissertations},
  school = {University of Missouri--Rolla}
}

@inproceedings{chen2024inverse,
  author    = {Zhixing Chen and Lei Guo},
  title     = {An Inverse Problem for Adaptive Linear Quadratic Stochastic Differential Games},
  booktitle = {Proceedings of the 63rd IEEE Conference on Decision and Control (CDC)},
  year      = {2024},
  pages = {1838--1843},
  doi   = {10.1109/CDC56724.2024.10886605},
  publisher = {IEEE}
}

@article{inga2019solution,
  author  = {Jairo Inga and Esther Bischoff and Timothy L. Molloy and Michael Flad and S{\"o}ren Hohmann},
  title   = {Solution Sets for Inverse Non-Cooperative Linear-Quadratic Differential Games},
  journal = {IEEE Control Systems Letters},
  volume  = {3},
  number  = {4},
  pages = {871--876},
  year    = {2019},
  publisher = {IEEE}
}

@article{arxiv-version,
  author  = {Lambrecht, Gr{\'e}goire and Lauri{\`e}re, Mathieu},
  title   = {Inverse Problems for Costs and Controls in {LQG} {MFGs} via Mean Field Trajectories},
  journal = {arXiv:2606.09302},
  year    = {2026}
}

\begin{arxivonly}
\appendix

\section{Riccati Equations}
\label{appendix:riccati}

We recall two useful theorems from \cite{AbouKandilFreilingIonescuJank2003}.

\begin{theorem}[4.1.6., p.186, \cite{AbouKandilFreilingIonescuJank2003}]
	\label{theorem:riccati_1}
	Let $A_t \in \RR^{n\times n}$ for $t \le T$. Let $F\in\bbS_+^n$. If $S_t, Q_t \in \bbS^n_+$ for $t \le T$, then the (unique) solution $\fX$ of the Riccati differential equation
	\begin{equation*}
		\dot{\fX}_t = -A^\dag_t \fX_t - \fX_tA_t - Q_t + \fX_tS_t\fX_t, \quad \fX_T = F
	\end{equation*}
	with piecewise continuous and locally bounded coefficients exists for $t \le T$ with
	$0 \preccurlyeq \fX_t$ for $t \le T;$
\end{theorem}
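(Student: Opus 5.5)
The plan is the classical continuation argument. Local existence comes from the ODE theory, and global existence follows from a priori two-sided bounds obtained by reading $\fX$ as the value matrix of an auxiliary linear-quadratic control problem. Uniqueness is immediate because the right-hand side $\fX \mapsto -A_t^\dag \fX - \fX A_t - Q_t + \fX S_t \fX$ is locally Lipschitz in $\fX$, uniformly for $t$ in compact sets, since the coefficients are locally bounded. With piecewise continuous coefficients I would work with absolutely continuous (Carathéodory) solutions on each continuity interval and glue them at the break points.

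First, the Cauchy--Lipschitz theorem applied backward from $\fX_T = F$ gives a unique solution on a maximal interval $(\tau, T]$. If $\tau > -\infty$ (or $\tau$ lies above the left end of the time domain), then $\|\fX_t\|$ must blow up as $t \downarrow \tau$. The solution is symmetric, since $\fX^\dag$ solves the same equation with the same terminal value. So it suffices to prove that, on any compact $[t_0, T] \subset (\tau, T]$,
\[
0 \preccurlyeq \fX_t \preccurlyeq \Phi(T,t)^\dag F \Phi(T,t) + \int_t^T \Phi(s,t)^\dag Q_s \Phi(s,t)\, ds ,
\]
where $\Phi$ is the transition matrix of $\dot y = A_s y$. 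The right-hand side is bounded on compact intervals, so these bounds exclude blow-up and give existence for all $t \le T$ together with $\fX_t \succeq 0$.

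To obtain both bounds I would use a verification argument. Write $S_s = C_s C_s^\dag$ with $C_s = S_s^{1/2}$, which is possible because $S_s \in \bbS^n_+$. Fix $t \in (\tau, T]$ and $x \in \RR^n$, and consider $\dot y_s = A_s y_s - C_s u_s$ on $[t,T]$ with $y_t = x$, together with the cost
\[
J_t(x,u) = \int_t^T \bigl( y_s^\dag Q_s y_s + |u_s|^2 \bigr) ds + y_T^\dag F y_T .
\]
Differentiating $y_s^\dag \fX_s y_s$ along trajectories and completing the square with the Riccati equation gives
\[
J_t(x,u) = x^\dag \fX_t x + \int_t^T |u_s - C_s^\dag \fX_s y_s|^2\, ds .
\]
Hence $x^\dag \fX_t x = \min_u J_t(x,u)$, attained at the feedback $u = C^\dag \fX y$, whose closed-loop linear ODE is solvable on $[t,T]$. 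The lower bound then follows from $J \ge 0$, since $Q_s \succeq 0$ and $F \succeq 0$. The upper bound follows from evaluating $J$ at $u \equiv 0$, which gives exactly the displayed majorant.

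The step I expect to be the main obstacle is making the completing-the-square identity rigorous when the coefficients are only piecewise continuous. In that setting $\fX$ is merely absolutely continuous, and the product rule must be applied in its a.e. form. The identity should be integrated on each continuity interval and summed, using continuity of $\fX$ and $y$ at the break points. A second point to check is the sign convention: the orientation of the term $\fX S \fX$ must match the minimization, which is why the dynamics use $-C_s u_s$ (the sign of the control is in fact irrelevant). Once this is settled, the rest is routine.
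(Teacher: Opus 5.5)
Your proof is correct. It is also self-contained where the paper is not: the paper gives no proof of this statement, quoting it from Abou-Kandil et al.\ and using it as a black box (for $P$ in \eqref{eq:p_dyn} and for $-V$ in the proof of Theorem~\ref{theorem:existence_uniqueness_mfg_lqg}). So there is no in-paper argument to match, and your argument can stand on its own.

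Your identity checks out. With $S_s=C_sC_s^\dag$ and $\fX$ symmetric, one has $\frac{d}{ds}(y_s^\dag\fX_s y_s)=-y_s^\dag Q_s y_s+|C_s^\dag\fX_s y_s|^2-2u_s^\dag C_s^\dag\fX_s y_s$ almost everywhere. Integrating and using $\fX_T=F$ gives $J_t(x,u)=x^\dag\fX_t x+\int_t^T|u_s-C_s^\dag\fX_s y_s|^2\,ds$. Evaluating at the two controls $u=C^\dag\fX y$ and $u\equiv0$ gives the Loewner sandwich on the whole maximal interval, which excludes blow-up.

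The regularity point you flag needs no gluing. The map $s\mapsto y_s^\dag\fX_s y_s$ is a product of absolutely continuous functions on a compact interval, hence absolutely continuous, so the a.e.\ product rule and the fundamental theorem of calculus apply directly.

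The route more common in the Riccati literature goes through a comparison theorem: the difference of two solutions of symmetric Riccati equations solves a linear Lyapunov-type equation with sign-definite forcing. This sandwiches $\fX$ between the zero solution of the equation with $Q=0$, $F=0$ and the solution of the linear equation with $S=0$. Your argument additionally yields the value-function characterization $x^\dag\fX_t x=\min_u J_t(x,u)$, which is exactly the interpretation of $P$ underlying the MFG. The comparison route instead stays at the ODE level and gives monotone dependence on all coefficients at once.
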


\begin{theorem}[3.1.3., p.96, \cite{AbouKandilFreilingIonescuJank2003}]
	\label{theorem:riccati_2}
	Let $M(t) = \begin{pmatrix} M_{11} & M_{12} \\ M_{21} & M_{22} \end{pmatrix}(t) \in \mathbb{R}^{2n \times 2n} $ and $K_T \in \bbS^n_+$. If the Riccati differential equation
	\begin{equation*}
		\dot{W} = M_{12} + M_{11}W - WM_{22} - WM_{21}W, \quad
	\end{equation*}
	with $W_0 = 0,\, W_t \in \mathbb{R}^{n \times n} $, admits a solution in $[0, T]$ and if $\det(K_T W_T - I_n) \neq 0$,
	then the boundary value problem
	\begin{align*}
		\frac{d}{dt} \begin{pmatrix} x \\ r \end{pmatrix} & = \begin{pmatrix} M_{11} & M_{12} \\ M_{21} & M_{22} \end{pmatrix} \begin{pmatrix} x \\ r \end{pmatrix}, \quad
	\end{align*}
	with $x_{t= 0} = x_0, \quad r_T = K_T x_T,  $
	has a unique solution.
\end{theorem}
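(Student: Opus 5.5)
The plan is to \emph{decouple} the two-point boundary value problem using the solution $W$ of the Riccati equation, which turns it into two linear Cauchy problems: one forward in time and one backward in time. Let $W$ be the solution on $[0,T]$ of $\dot W = M_{12} + M_{11}W - WM_{22} - WM_{21}W$ with $W_0=0$; it exists by hypothesis. Given any pair $(x,r)$ solving the linear system, introduce the new variable
\[
y_t = x_t - W_t r_t .
\]

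\textbf{Step 1 (forward equation for $y$).} Differentiate $y$ and substitute $\dot x = M_{11}x + M_{12}r$, $\dot r = M_{21}x + M_{22}r$, the Riccati equation for $\dot W$, and $x = y + Wr$. All terms carrying $r$ should cancel:
\[
\dot y = M_{11}y + M_{11}Wr + M_{12}r - \dot W r - WM_{21}y - WM_{21}Wr - WM_{22}r = (M_{11} - WM_{21})\,y .
\]
Since $W_0=0$, we get $y_0 = x_0$. The coefficient $M_{11}-WM_{21}$ is continuous (locally bounded, piecewise continuous) on $[0,T]$. Cauchy--Lipschitz for linear ODEs then gives $y_t = \Phi_t x_0$, where $\Phi$ is the fundamental matrix of $M_{11}-WM_{21}$. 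In particular $y$ is uniquely determined by $x_0$.

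\textbf{Step 2 (backward equation for $r$ and the terminal condition).} Substituting $x = y + Wr$ into the $r$-equation gives the linear inhomogeneous equation
\[
\dot r = (M_{22} + M_{21}W)\,r + M_{21}\,y .
\]
The boundary condition $r_T = K_T x_T = K_T(y_T + W_T r_T)$ becomes $(I_n - K_TW_T)\,r_T = K_T y_T$. The hypothesis $\det(K_TW_T - I_n)\neq 0$ makes this system uniquely solvable, so $r_T = (I_n - K_TW_T)^{-1}K_T y_T$. Solving the linear equation backward from this terminal value determines $r$ uniquely on $[0,T]$. Finally set $x = y + Wr$.

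\textbf{Step 3 (existence and uniqueness).} For existence, reverse the computation. Define $y$, then $r$, then $x$ as above, and check that $\dot x = \dot y + \dot W r + W\dot r$ reduces to $M_{11}x + M_{12}r$; this is the same cancellation as in Step 1, read backwards. Also check the boundary conditions: $x_0 = y_0 = x_0$, and $r_T = K_T x_T$ holds by the choice of $r_T$. For uniqueness, note that Steps 1--2 show any solution produces the same $y$, then the same $r_T$, hence the same $r$ and $x$. Equivalently, the difference of two solutions solves the homogeneous problem ($x_0=0$), which forces $y\equiv 0$, then $r_T=0$, hence $r\equiv 0$ and $x\equiv 0$.

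The only delicate point is the algebraic cancellation in Step 1. It works precisely because $W$ satisfies the Riccati equation with the sign conventions as stated, so the key check is that these signs match the block structure of $M$. Everything else is standard linear ODE theory. It relies on $W$ existing on all of $[0,T]$, which prevents blow-up of the coefficients, and on the invertibility condition at time $T$.
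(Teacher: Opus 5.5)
Your proof is correct and takes essentially the same route the paper relies on. The substitution $y = x - Wr$ gives $\dot y = (M_{11} - WM_{21})y$ with $y_0 = x_0$, the terminal condition becomes $(I_n - K_TW_T)r_T = K_Ty_T$, and existence and uniqueness then follow from linear ODE theory exactly as you describe; positive semidefiniteness of $K_T$ is never needed. The paper only cites this result, but its proof of Lemma~\ref{lemma:barx0_mapsto_barxT} uses your decomposition $\bar x_t = Z_t r_t + \Phi_t \bar x_0$, merely solving the terminal condition for $x_T$ through $I_n - W_TK_T$ rather than for $r_T$ through $I_n - K_TW_T$, which is equivalent because $\det(I_n - W_TK_T) = \det(I_n - K_TW_T)$.
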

\end{arxivonly}

\end{document}